\definecolor{Mycolor1}{HTML}{2B2E02} 
\definecolor{Mycolor2}{HTML}{0A2A4E}
\definecolor{Mycolor3}{HTML}{100000}
\definecolor{Mycolor4}{HTML}{349082}
\newcommand\org@hypertarget{}
\let\org@hypertarget\hypertarget
\renewcommand\hypertarget[2]{
  \Hy@raisedlink{\org@hypertarget{#1}{}}#2
} 
\renewcommand*{\geq}{\geqslant}
\renewcommand*{\epsilon}{\varepsilon}
\let\oldIm=\Im
\renewcommand*{\Im}{\mathop{\oldIm\mkern-2mu 
m}\nolimits}
\let\oldRe=\Re
\renewcommand*{\Re}{\mathop{\oldRe\mkern-2mu e}\nolimits}
\newcommand*{\A}{\mathbb{A}}
\newcommand*{\bA}{\bold A}
\newcommand*{\cA}{\mathcal{A}}
\newcommand*{\bB}{\bold B}
\newcommand*{\bC}{\bold C}
 \newcommand*{\gd}{\mathfrak{d}} 
  \newcommand*{\gD}{\mathfrak{D}}
\newcommand*{\F}{\mathbb{F}}
\newcommand*{\cF}{\mathcal{F}}
\newcommand*{\gK}{\mathcal{K}}
\newcommand*{\gk}{\mathfrak{k}}
\newcommand*{\gok}{\mathfrak{K}}
\newcommand*{\cM}{\mathcal{M}}
\newcommand*{\cO}{\mathcal{O}}
\newcommand*{\Q}{\mathbb{Q}}
\newcommand*{\R}{\mathbb{R}}
\newcommand*{\Z}{\mathbb{Z}}
\newcommand*{\vZ}{\mathbb{Z}} 
\newcommand*{\sint}{\phantom{\int}^{\ell}\!\!\!\int}
\newcommand*{\lint}{\!\!\!\!\!\mathlarger{\phantom{\int}^{\ell}}\!\!\!\!\int}
\newcommand*{\To}{\longrightarrow}
\newlength{\arrow}
 \newlength{\h@uteurnumerateur}
 \newlength{\h@uteurdenominateur}
\newcommand*{\quotientdroite}[2]{
  \mathchoice
  {
    \settoheight{\h@uteurnumerateur}{\ensuremath{\displaystyle{#1#2}}}
    \settoheight{\h@uteurdenominateur}{\ensuremath{\displaystyle{#1#2}}}
    \raisebox{0.5\h@uteurnumerateur}{\ensuremath{\displaystyle{#1}}}
    \mkern-5mu\diagup\mkern-4mu
    \raisebox{-0.5\h@uteurdenominateur}{\ensuremath{\displaystyle{#2}}}
  }
  {
    \settoheight{\h@uteurnumerateur}{\ensuremath{\textstyle{#1#2}}}
    \settoheight{\h@uteurdenominateur}{\ensuremath{\textstyle{#1#2}}}
  \raisebox{0.2\h@uteurnumerateur}{\ensuremath{\textstyle{#1}}}
  /
  \raisebox{-0.2\h@uteurdenominateur}{\ensuremath{\textstyle{#2}}}
  }
  {
    \settoheight{\h@uteurnumerateur}{\ensuremath{\scriptstyle{#1#2}}}
    \settoheight{\h@uteurdenominateur}{\ensuremath{\scriptstyle{#1#2}}}
  \raisebox{0.2\h@uteurnumerateur}{\ensuremath{\scriptstyle{#1}}}
  /
  \raisebox{-0.2\h@uteurdenominateur}{\ensuremath{\scriptstyle{#2}}}
  }
  {
    
\settoheight{\h@uteurnumerateur}{\ensuremath{\scriptscriptstyle{#1#2}}}
    
\settoheight{\h@uteurdenominateur}{\ensuremath{\scriptscriptstyle{#1#2}}}
  \raisebox{0.2\h@uteurnumerateur}{\ensuremath{\scriptscriptstyle{#1}}}
  /
  \raisebox{-0.2\h@uteurdenominateur}{\ensuremath{\scriptscriptstyle{#2}}}
  }
}
\newcommand*{\quotientgauche}[2]{
  \mathchoice
  {
    \settoheight{\h@uteurnumerateur}{\ensuremath{\displaystyle{#1#2}}}
    \settoheight{\h@uteurdenominateur}{\ensuremath{\displaystyle{#1#2}}}
    \raisebox{-0.5\h@uteurnumerateur}{\ensuremath{\displaystyle{#1}}}
    \mkern-3mu\diagdown\mkern-5mu
    \raisebox{0.5\h@uteurdenominateur}{\ensuremath{\displaystyle{#2}}}
  }
  {
    \settoheight{\h@uteurnumerateur}{\ensuremath{\textstyle{#1#2}}}
    \settoheight{\h@uteurdenominateur}{\ensuremath{\textstyle{#1#2}}}
  \raisebox{-0.2\h@uteurnumerateur}{\ensuremath{\textstyle{#1}}}
  \backslash
  \raisebox{0.2\h@uteurdenominateur}{\ensuremath{\textstyle{#2}}}
  }
  {
    \settoheight{\h@uteurnumerateur}{\ensuremath{\scriptstyle{#1#2}}}
    \settoheight{\h@uteurdenominateur}{\ensuremath{\scriptstyle{#1#2}}}
  \raisebox{-0.2\h@uteurnumerateur}{\ensuremath{\scriptstyle{#1}}}
  \backslash
  \raisebox{0.2\h@uteurdenominateur}{\ensuremath{\scriptstyle{#2}}}
  }
  {
    
\settoheight{\h@uteurnumerateur}{\ensuremath{\scriptscriptstyle{#1#2}}}
    
\settoheight{\h@uteurdenominateur}{\ensuremath{\scriptscriptstyle{#1#2}}}
  \raisebox{-0.2\h@uteurnumerateur}{\ensuremath{\scriptscriptstyle{#1}}}
  \backslash
  \raisebox{0.2\h@uteurdenominateur}{\ensuremath{\scriptscriptstyle{#2}}}
  }
}
\newtheoremstyle{erdfn}
  {}
  {}
  {\itshape}
  {}
  {\rmfamily\scshape}
  {.\ }
  { }
  {}
\newtheoremstyle{erthm}
  {}
  {}
  {\itshape}
  {}
  {\rmfamily\scshape}
  {.\ }
  { } 
  {}
\newtheoremstyle{errem}
  {}
  {}
  {}
  {}
  {\rmfamily\scshape}
  {.\ }
  { }   
  {}
\theoremstyle{erthm}
\theoremstyle{erdfn}  
\newtheorem*{thm}{Theorem}
\newtheorem*{prp}{Proposition}
\newtheorem*{crl}{Corollary}
\newtheorem*{lem}{Lemma}
\theoremstyle{errem}
\newtheorem*{erthm}{Remark} 
\numberwithin{equation}{section}
\newtheoremstyle{defstyle}
    {}
    {}
    {\normalfont}
    {}
    {\scshape}
    {.}
    { }
    {\thmnumber{{#2.}}\thmname{ #1}\thmnote{ (\normalfont #3)}}
 \theoremstyle{defstyle}
\newtheorem*{Definition}{Definition}
\def\@subsubseccntformat#1{
  \expandafter\ifx\csname c@#1\endcsname\c@section\else
  \csname the#1\endcsname\quad
  \fi}
\title[Selective integration on higher adeles  and the Euler characteristic of surfaces]
{Selective integration on higher adeles \\ and the Euler characteristic of surfaces} 
\author[Weronika Czerniawska, Ivan Fesenko]{Weronika Czerniawska, Ivan Fesenko}
\begin{abstract}
The space of two-dimensional geometric adeles of a surface  is  far from being a locally compact space and there is no translation countably additive invariant nontrivial measure on it.   
 At the same time, certain  subquotients of the adeles are direct limits of  compact subquotients or inverse limits of discrete subquotients, compatible in a special way.  Using this fact, the paper defines  a translation invariant measure and integration on certain  subquotients of the geometric adeles of surfaces. This theory  is considerably different from the theory of integration on  analytic adeles of surfaces. After revising aspects of one-dimensional theory, the paper includes a full definition of two-dimensional geometric adeles. A number  of their new topological properties are established.  The new translation invariant measure and integration on selective subquotients of the geometric adeles is used for integrals of certain functions in a  two-dimensional  method describing  the size of adelic cohomology groups of surfaces, without using standard adelic complexes.  A   formula for the Euler characteristic of the surface and its divisor in terms of integrals over subquotients of geometric adeles is proved.  Using the Euler characteristic,  a new  two-dimensional adelic intersection number  is introduced. For geometric surfaces it is a positive multiple of the standard intersection number. Several results in the previous study of geometric adeles are given new proofs. \end{abstract}
\begin{document}

\noindent {\bf 0}. This paper continues the higher adelic study of aspects of geometry of surfaces in \cite{ARR}, by lifting certain discrete or linear algebraic objects to two-dimensional adelic objects with richer structure and using their topological properties, as well as the new measure and integration on some of them.

The higher adelic study   in \cite{ARR}  for projective irreducible curves and projective smooth surfaces over perfect fields used  topological properties and self-duality of the one-dimensional and two-dimensional geometric adeles to study cohomology spaces of adelic complexes. The latter were proved to be of finite dimension. An additive adelic formula for the intersection of divisors was established. This formula easily implied the adelic Riemann--Roch theorem. 
Adelic objects for number fields and for arithmetic surfaces, studied in \cite{Ada}, \cite{AoA2},   \cite{CD}, also involve non-linear data associated to archimedean points of fibres. Hence, the approach in \cite{ARR} which works in the geometric case using adelic complexes and their linear cohomology spaces cannot be directly extended to  arithmetic schemes, due to the highly non-linear  archimedean aspects.

In order to have a universal approach which works both for geometric objects over finite fields and for arithmetic schemes, new points of view are needed. This paper proposes one of them, based on the use of topological properties of  adelic subquotients and integrals over them, as a  two-dimensional extension of some aspects of the one-dimensional approaches in the Iwasawa--Tate theory, see e.g. sect. 5 Ch. 3 of \cite{Fb}, and in Borisov's paper  \cite{Bo}. 

This is a very different theory from that in \cite{Ada}, \cite{AoA2}, but  some of  results in this paper  will be useful for further research related to the latter. 

Section 1 reviews  the one-dimensional case presented in a form suitable for its generalisation  to the two-dimensional case. One deduces the adelic Riemann--Roch theorem from adelic duality and standard properties of Fourier transform, see e.g. sect. 0 of \cite{ARR}. 
In the one-dimensional arithmetic case cohomologies $H^i(D)$ for a divisor $D$ can be infinite or not defined, however one can produce a non-zero numbers $h^i(D)$, $i=0,1$, which can be thought of as the volume/size of the cohomology group. These numbers  $h^i(D)$ are defined as integrals over an appropriate adelic subquotient  not depending on $D$ of a function  depending on the divisor $D$. For a  different but essentially equivalent presentation see \cite{Bo}.  The self-duality of the adeles implies
$$h^1(D_\alpha)=h^0(D_{\gk\alpha^{-1}}),$$
where $\gk$ is a relevant  idele such that $D_\gk$ is a canonical replete divisor.  
We establish a formula for the Euler characteristic of a replete divisor $D_\alpha$ of a global field, corresponding to an idele $\alpha$, with  corresponding function $f_\alpha$
$$
 \chi_{\Bbb A}(D_\alpha)=  \log \int_{\A}\, f_\alpha\, \mu_{\A}.$$
We also obtain a Riemann--Roch-like formula
 $$\chi_{\Bbb A}(D_\alpha)-\chi_{\Bbb A}(D_1)=\log |\alpha|.$$

The additive group of two-dimensional geometric adeles is not a locally compact topological space. 
It is fundamentally different from the additive group of two-dimensional analytic adeles which admits a non-trivial $\R(\!(x)\!)$-valued  translation invariant measure and integration, \cite{AoA2}, used in the study of the zeta function.  Still, both groups are self-dual topological groups or close to such. 

Section 2 starts with a full definition of   two-dimensional geometric adeles $\bA$ and subobjects $\bB$, $\bC$, and new results about  topological properties of their certain subquotients, extending \cite{ARR} and \cite{CD}.  In particular, it includes the proof of discreteness of global elements $K$ in the geometric adeles $\bA$ and the property $\bB\cap\bC=K$.

In section 3 we develop and use normalised translation invariant measure and  integration on certain subquotients of the two-dimensional geometric adeles, using their presentation as limits of locally compact subquotients. These measures  are  normalised in a way compatible with a distinguished geometric structure of the adeles. Then we define translation invariant measures on $\bA/\bB$, $\bB$, $\bA$, $\bB+\bC$, $\bC$.

In section 4, using section 3, we define and study the  size of adelic cohomology groups, without using standard  adelic complexes. The newly introduced adelic cohomology numbers $h^i(D)$, $i=0,1,2$, for divisors on surfaces are integrals over certain  adelic subquotients that are not dependent on divisors of a function  dependent on divisors.   Both in the geometric and arithmetic cases we derive the formula $$h^i(D)=h^{2-i}(\gK-D),$$ where $\gK$ is an adelic 
 \lq canonical divisor\rq.  
 
A new formula for the Euler characteristic of a divisor $D_\alpha$ corresponding to $\alpha\in \bB^\times$ in terms of integrals over certain subquotients of geometric adeles is obtained in 4-3:
 $$
 \chi_{\bA} (D_\alpha)=\log\int_{\bC}\, f_{\alpha} \, \mu_{\bC} - \log\int_{\bA/\bB}\, f_{\alpha, \bA/\bB}\,  \mu_{\bA/\bB},
 $$
for notation see the main text of the paper. This formula can be compared with the one-dimensional formula mentioned above.

 Using the Euler characteristic, we introduce a  {\em two-dimensional adelic intersection index} on surfaces
$$
[D_\alpha,D_\beta]:=\chi(D_1)-\chi(D_\alpha)-\chi(D_\beta)+\chi(D_{\alpha\beta}).
$$
Several first results about this index are included. 

In the geometric case, 
for a divisor $D$ whose support does not include an irreducible proper curve $y$, with its divisor $D_y$, we prove  
  $$[D,D_y]=\chi_{\Bbb A_{k(y)}}(D\vert_y)-\chi_{\Bbb A_{k(y)}}(0)=\log q\cdot \deg_{\Bbb A_{k(y)}}(D\vert_y).$$
This formula is used to show  that the intersection index is equal, up to a constant  positive multiple, to the usual intersection index. We also obtain new proofs of several results of \cite{ARR}. 

 An analogue of this formula in the arithmetic case and the relation of  the two-dimensional adelic  intersection index  to  the Arakelov intersection index are open questions; other open problems are stated in  4-6.

\bigskip

\noindent {\bf 1. One-dimensional case}

\bigskip 

\noindent {\bf 1-1}.  We start with one-dimensional adeles. Let $k$ be a global field, i.e. a number field or the function field of a smooth, projective, geometrically integral curve over a finite field $\F_q$. Let $\A$ be the adelic space associated to $k$. Let  $\A(0)$ be its subspace which is the product of non-archimedean rings of integers with the full archimedean components in the arithmetic case.  

For any non-trivial Haar measure $\mu$ of an abelian locally compact group and its element $\alpha$ its module $|\alpha|$  is $\mu(\alpha P)/\mu(P)$ for any measurable subset $P$ of non-zero measure; it does not depend on the choice of $P$ and the scaling of $\mu$. For the  completion $k_v$ of $k$ with respect to a  place $v$ denote by $|\,\,|_v$ the module of $k_v$.
Note that for complex places $v$ the module $|\,\,|_v$ is the square of the complex absolute value.  
 It is classical that their product over all places $v$ satisfies the product formula: this product  equals 1 on the diagonal image of $k^\times$. For non-archimedean places $v$ the function $-\log|\,\,|_v/\log \# \,k(v)\colon k_v^\times\to \vZ$,  where $k(v)$ is the residue field of $v$, is a surjective homomorphism and it gives the discrete valuation on $k_v$. Denote this function by  $v(\,\,)$, this should not lead to confusion. For archimedean places $v$ denote by $v(\,\,)$  the function $-\log|\,\,|_v$.

For an idele $\alpha=(\alpha_v)\in \A^\times$, define the {\em replete divisor}  
$$D_\alpha=-\sum v(\alpha_v) [v],$$ $v$ runs through all places of $k$.  

In the geometric case, we get the usual divisor.  We have $\log |\alpha|=\log q \cdot \deg D_\alpha$.

 In the arithmetic case, if $v(\alpha_v)=0$ for all archimedean  places   then $D_\alpha$ is a usual divisor. 

The homomorphisms from $\A^\times$ to the group of divisors and the group of replete divisors are surjective. From various perspectives, it is more useful to deal with the \lq covering object\rq\  $\A^\times$, the locally compact group of invertible adeles, than with the discrete  divisors group. 

\smallskip

For a  divisor $D=\sum n_v [v]$ define
 $$\A(D)=\{\beta\in\A: v(\beta)\geq-n_v\},$$ 
 where $v$ runs through all non-archimedean 
 places of $k$. In particular, we have $\A(0)=\A(D_1)$. Define similarly  $\A(D_\alpha)$ for a replete divisor $D_\alpha$ by not imposing inequality restrictions on  archimedean components. 
 Then    $\A(D_\alpha)=\alpha\A(0)$.

\smallskip
   
  The additive group of adeles $\A$ is the 
  direct  limit of $\A(D)$ with $D$ running over all divisors.

\smallskip

For a divisor $D$ one can consider the well known  adelic complex  
$$\cA(D):\qquad\qquad {k}\oplus \A(D)\To \A , \qquad (a,b)\mapsto a-b.$$
We have 
$$\begin{aligned} H^0(\cA(D))&=k\cap \A(D),\\
H^1(\cA(D))&=\A/(k+\A(D)).\end{aligned}$$
\smallskip 

In the geometric case its cohomologies are finite. In the arithmetic case its cohomologies are not necessarily finite, for example $H^0(\cA(0))$ is the ring of integers $O_k$ which is discrete in $O_k\otimes_{\Q}\R$.  Working with replete divisors and subsets $\{\beta\in\A: v(\beta)\geq-n_v\}$ for all places $v$ produces finite sets, but its complement of with respect to the pairing $ \langle\,\,,\,\,\rangle$ defined in 1-2 is zero. 

As usual, in this paper when we talk about an isomorphism between groups endowed with topologies we mean a topological isomorphism, i.e. an isomorphism which is a homeomorphism. 

\medskip

\noindent {\bf 1-2}.  
Let $\A'$ be the set of all elements of $\A$ with   zero components at archimedean places (so $\A'=\A$ in the positive characteristic case). Denote $\A(D)'=\A(D)\cap \A'$. 

Fix a nontrivial character $\psi_0$ of $\A$ vanishing on $k$; associated to it one has the  well known continuous  non-degenerate pairing 
$$\langle\,\,,\,\,\rangle=\langle\,\,,\,\,\rangle_{\psi_0}\colon\A\times\A\To S^1,\qquad \langle \alpha,\beta \rangle=\psi_0(\alpha\beta),$$
where $S^1$ is the complex unit circle. 

One easily sees, e.g. \cite{W},  that the additive group of $\A$ is isomorphic to its group of characters, that the complement of $k$  is $k$, 
that the complement $\A(D)^\perp$ of $\A(D)$  is $\A(\gok-D)'$ where $\gok=\sum - n_v [v]$, the sum over non-archimedean places $v$, with $n_v$ equal to the $v$-valuation of the different $\gd_v$ of $k_v$.

Denote by
 $\cA(D)'$  the complex 
$${k}\oplus \A(D)'\To \A, \quad (a,b)\mapsto a-b.$$ 
Then 
the character group of $H^i(\cA(D))$   is isomorphic to $H^{1-i}(\cA(\gok-D)')$, $i=0,1$,  while the character group of the latter is isomorphic to the former group. 

 \smallskip
 
 In the arithmetic case  $H^1(\cA(D))=0$.  
 For example, if $k=\Q$ then $H^1(\cA_\Q(D))=\A_\Q/(\A_\Q(D)+\Q)=0$ and $H^0(\cA_\Q(D)')=0$.
 Using the standard character $\psi_*$ of $\A_\Q$ (specified e.g. in \cite{T}, and 4.3 Ch. 3 of \cite{Fb}), the complement $\A_\Q(0)^\perp$ is  $\prod \Z_p\times\{0\}$, 
 so $H^1(\cA_\Q(0)')=\A_\Q/(\A_\Q(0)'+\Q)=\A_\Q/(\A_\Q(0)^\perp+\Q)$ equal to the quotient 
 $(\prod \Z_p \times\R+\Q)/(\prod \Z_p\times\{0\}+\Q)\simeq \R/\Z$ isomorphic to the character group of $H^0(\cA_\Q(0))=\Z$.  
This can be seen using  $(G+H)/(G+K)\simeq H/(K+G\cap H)$ and the projection on the real component.

\medskip

\noindent {\bf 1-3}.   In order (a)  to have a generally non-zero number $h^1$
 corresponding to the zero $H^1$ in the arithmetic case, and (b) not to involve the smaller adelic space $\A'$ in the arithmetic case, one can use integrals of certain functions instead of the cohomologies (i.e. linear algebra objects).  
  One can view some aspects of the classical works of Iwasawa \cite{I} and Tate \cite{T},  see also Weil \cite{W} and sect. 5 Ch. 3 of \cite{Fb}, as  contributing to (a) and (b) in dimension one. The paper  \cite{Bo} defined numbers $h^i$ in a way essentially similar to 1-5 below, but without using adelic language.     

\smallskip

 In this paper we proceed as follows.  

For an idele $\alpha=(\alpha_v)\in\A^\times$ 
define  
$$f_\alpha(u)=\prod_v f_{\alpha_v}(u_v)\colon \A\To\R, \quad u=(u_v),$$ 
where 
$f_{\alpha_v}(u_v)=f_{1}(\alpha_v^{-1}u_v)$ and 
$$
f_1(u_v)=
\begin{cases}
\text{\rm char}_{O_v}(u_v) \qquad\qquad\ \text{\rm \ if $v$ is a non-archimedean place},\\
\exp\bigl(-e_v\pi |\,u_v|_v^{2/e_v}\bigr) \quad \text{\rm \  if $v$ is an archimedean place.} 
\end{cases}
$$
Here 
$e_v$ is the $\R$-dimension 
 of the archimedean completion of $k$ with respect to $v$.  

Thus, $f_\alpha(u)=f_1(\alpha^{-1}u)$ and  $f_\alpha(0)=1$. 
In the geometric case $f_\alpha=\text{\rm char}_{\alpha\A(0)}$, in the arithmetic case the restriction of $f_\alpha$ on $\A'$ is $\text{\rm char}_{\alpha\A(0)'}$. 

The map $\A^\times\ni\alpha\To\{f_\alpha:\alpha\in \A^\times\}$  factorises through the well defined map  from replete divisors $\{D_\alpha:\alpha\in\A^\times\}$ to the set $\{f_\alpha:\alpha\in \A^\times\}$.

 It is well known that the Gaussian function can be viewed as the archimedean analogue of the characteristic function of the ring of integers (or a proper non-zero fractional ideal) of the non-archimedean completion of a global field in the sense that for almost all non-archimedean $v$ they are the eigenfunctions of appropriately normalised local Fourier transforms.

\medskip

\noindent {\bf 1-4}.   
Then in the geometric case  
$$\int_k \, f_\alpha=\int_{ k\cap \alpha \A(0)} 1=\#\, k\cap \alpha\A(0),$$ 
where the measure on $k$ and on $k\cap \alpha \A(0)$ is counting. 
The set $k\cap \alpha \A(0)$  is  equal to $H^0(\cA(D_\alpha))$.

In the arithmetic case we have the  integral 
$$\int_k \, f_\alpha=
 \int_{ k\cap \alpha \A(0)} 
 \exp\bigl(  
-\pi  \sum_v e_v |\alpha_v^{-1} \,u|_v^{2/e_v} \bigr) du=
\sum_{u\in  k\cap \alpha \A(0)} 
\exp\bigl(  
-\pi  \sum_v e_v |\alpha_v^{-1} \,u|_v^{2/e_v} \bigr),
$$
where the measure on $k$ and on $k\cap \alpha \A(0)$ is counting, the internal sum is taken over archimedean places. 

\smallskip
 
Now, both in the geometric and arithmetic cases define
\smallskip
 $$\text{\rm h}^0(D_\alpha):=\int_k \, f_\alpha.  
 $$ 
 \smallskip
This $\text{\rm h}^0(D_\alpha)$ coincides with the cardinality $\#\, H^0(\cA(D_\alpha))$ in the geometric case.
 
 \medskip
 
\noindent {\bf 1-5}.   
A real valued  function $g$ on $\A$, integrable on $k$, normalised  by the condition $g(0)=1$, induces the normalised function on $\A/k$   
$$g_{\A/k}\colon a+k\mapsto  \biggl(\int_k g (a+u) du\biggr)\,  \biggl(\int_k g(u) du\biggr)^{-1} .$$

If we apply this to $f_\alpha$, we get 
 the  integrable function  on $\A/k$:
$$f_{\alpha, \A/k}\colon a+k\mapsto  \biggl(\int_k f_\alpha (a+u) du\biggr)\,  \biggl(\int_k f_\alpha (u) du\biggr)^{-1} .$$ 
In the geometric case $f_{\alpha, \A/k}=\text{\rm char}_{(k+\A(D_\alpha))/k}$. 

Both in the geometric and arithmetic cases define $\text{\rm h}^1$  as 
\smallskip
$$\text{\rm h}^1(D_\alpha):=\biggl(\int_{\A/k}\, f_{\alpha, \A/k}\biggr)^{-1},$$ 
\smallskip
where the measure on the compact $\A/k$ is the probabilistic one.
In the geometric case this gives the usual number: 
$$\text{\rm h}^1(D_\alpha)=|\A/k: (k+\A(D_\alpha))/k|=\#\, \A/(k+\A(D_\alpha))=\#\,  H^1(\cA(D_\alpha)).$$ 
In the arithmetic case the number $\text{\rm h}^1(D_\alpha)$ is more interesting than $0=H^1(\cA(D_\alpha))$. 

\medskip 

The adelic complex $\cA(D)$ involves spaces depending on $D$. In contrast, 
the integrals defining $\text{\rm h}^j(D_\alpha)$ involve the spaces not dependent on  idele $\alpha$ (or the divisor), but the function one integrates does depend on $D_\alpha$.

 \medskip
 
\noindent {\bf 1-6}.  
Both in the geometric and arithmetic cases, the definitions imply 
$$
\text{\rm h}^1(D_\alpha)=\text{\rm h}^0(D_\alpha)\,\left( \int_{\A}\, f_\alpha \right)^{-1},
$$
where the measure on $\A$ is normalised in the following way: it is the product of the counting measure on discrete $k$ and the dual to it probabilistic measure on compact $\A/k$. Hence this translation invariant measure on $\A$ is {\em self-dual} with respect to the standard character $\psi_*$ of $\A$. It is well known that this measure   can be described as the product of the following measures on completions of $k$: the translation invariant measure on the non-archimedean completion $k_v$ of $k$ which gives the local integers $O_v$ the volume  $|O_v:\gd_v|^{-1/2}$, $\gd_v$ is the  different of $k_v$, the Lebesque measure on real numbers and twice of the Lebesque measure on  complex numbers (see e.g. \cite{T}).

 \medskip
 
\noindent {\bf 1-7}.   Denote
\smallskip
$$h^j(D_\alpha):=\log \, \text{\rm h}^j(D_\alpha), \quad \chi_{\Bbb A}(D_\alpha):=h^0(D_\alpha)-h^1(D_\alpha).$$
\smallskip

Using the previous results, we deduce \smallskip
 $$
 \chi_{\Bbb A}(D_\alpha)=  \log \int_{\A}\, f_\alpha.
$$
\smallskip

In the arithmetic case for a divisor $D=D_\alpha$ this is the same as 
the minus $\log$ of the covolume of the lattice $H^0(D)=k\cap \A(D)$ in its realification $(k\cap \A(D))\otimes_\Z\R= k\otimes_\Z\R$
when the latter   is endowed with the product of the measures on the real and complex completions of $k$ as specified in 1-6.
For a replete divisor $D_\alpha$
the number  $\chi_{\Bbb A}(D_\alpha)$ equals the minus $\log$ of the covolume of $k\cap \A(E)$, where $E$ is the divisor corresponding to the non-archimedean part of $\alpha$   
 in its realification $ k\otimes_\Z\R$ with respect to the rescaled measure 
 on the latter corresponding to the archimedean part of $\alpha$.

 \medskip

\noindent {\bf 1-8}.
Let $\gk$ be an idele such that $D_\gk$ is $\gok$ defined in 1-2.

Using 1-6, we get 
$$ \chi_{\Bbb A}(D_\alpha)= \log |\alpha| + \frac{1}{2} \, 
 \log|\gk|, 
 $$
where  $|\alpha|$ is the module of idele $\alpha$ (in the geometric case $|\alpha|=q^{-\deg D_\alpha}$)  and  
$$|\gk|=
\begin{cases}  q^{2-2g}  &\mbox{in the geometric case,} \\
  \gD_{k/\Q}^{-1} &\mbox{in the arithmetic case,} \end{cases}
$$
where $g$ is the genus of a smooth, proper, irreducible, geometrically integrable  curve over $\F_q$ with the function field $k$,  $\gD_{k/\Q}$ the absolute value of the discriminant  of the number field $k$. 

In particular, 
$$\chi_{\Bbb A}(D_\alpha)-\chi_{\Bbb A}(D_1)=\log |\alpha|.$$
In the geometric case 
$$\chi_{\Bbb A}(D_\alpha)-\chi_{\Bbb A}(D_1)=\log q \cdot \deg D_\alpha 
.$$

 \medskip
 
 \noindent {\bf 1-9}. 
 For a function $g$ on a subquotient $E$ of $\A$ call the function $g(0)^{-1} g$ its normalisation. 
 
The group of characters of $k$ is isomorphic to $\A/k$, e.g. \cite{W}.  
The  Fourier transform of a function $g$ on $k$ is the function on $\A/k$ whose value at $y+k\in \A/k$ is the integral over $k$ with respect to its counting measure of the product  $g(x)\psi_*(xy)$ with standard character $\psi_*$. 
 The definition of $f_\alpha$ and its behaviour  with respect to the Fourier transform on $\A$ associated to the character $\psi_*$ and the self-dual measure $\mu$ on $\A$ imply that the restriction of $f_{\gk \alpha^{-1}}$ on $k$ is equal to the normalisation of  the Fourier transform of $f_{\alpha,\A/k}$.  
   Hence 
 $f_{\alpha,\A/k}$ is the inverse Fourier transform of $f_{\gk \alpha^{-1}}$   times the inverse of the value of the Fourier transform of $f_{\alpha,\A/k}$  at 0.
 So $1=f_{\alpha,\A/k}(0)$ equals the product of  $\int_k f_{\gk \alpha^{-1}}$ and the Fourier transform of $f_{\alpha,\A/k}$  at 0, i.e. $\int_{\A/k} f_{\alpha,\A/k}$. Thus, $$\int_k f_{\gk \alpha^{-1}}\int_{\A/k} f_{\alpha,\A/k}=1,$$ i.e.   
$$h^1(D_\alpha)=h^0(D_{\gk\alpha^{-1}}),$$ 
and 
 $$\chi (D_\alpha)=-\chi(D_{\gk\alpha^{-1}}).$$ 

This and the last formula in 1-8 immediately imply the Riemann--Roch theorem. Of course, the argument in this subsection is closely related to the summation formula based on the adelic duality:
$$\int_k f_\alpha
=\int_k \cF(f_\alpha).
$$

 \medskip
 
\noindent {\bf 1-10}.  There is another important normalisation of the measures on  $k$ and $\A/k$ which takes into account the integral subspace $\A(0)'$:  
choose  the newly normalised measure $\mu_{\,\text{\rm new}}$ on $\A/k$  so that the volume of  
$(\A(0)'+k)/k$ is 1 
 and choose the dual to it  atomic measure on $k$. 
 Note that in order for an analogue of the argument in 1-9 to give a similar duality formula for $h^0$ and $h^1$, the measures on $\A/k$ and on $k$ should be dual to each other, i.e. 
volume of $\A/k$ times the volume of any point of $k$ should be equal to 1.
 The product of the newly normalised measures gives the same self-dual measure with respect to $\psi_*$  on $\A$ as before.

 In the geometric case the previous volume of $\A/(\A(0)+k)$ is $q\,\exp(-\chi_{\Bbb A}(D_1))=q^{\,g}$ and so the volume of 
 $(\A(0)+k)/k$ is $q^{-g}$, 
  hence $\mu_{\,\text{\rm new}}(\A/k)=q^{\,g}$  and the new atomic measure of a point of $k$ is $q^{-g}$. In the arithmetic case, similarly to 1-2, the previous volume of $\A/(\A(0)'+k)$ equals to  the volume of 
$(O_k\otimes_\Z \R)/O_k$ with the previously normalised archimedean measures, 
hence it equals $\exp(-\chi_{\Bbb A}(D_1))=|\gD_{k/\Q}|^{1/2}$, so the volume of 
 $(\A(0)'+k)/k$ is $|\gD_{k/\Q}|^{-1/2}$, hence 
$\mu_{\,\text{\rm new}}(\A/k)=|\gD_{k/\Q}|^{1/2}$
and  the new atomic measure of a point of $k$ is $|\gD_{k/\Q}|^{-1/2}$.

\smallskip 

{\em From now on we will use the notation  $\sint_{T} $ \ for \  $\log\int_T$.} 

For the newly normalised measures we have
$$
\begin{aligned}
h_{\,\text{\rm new}}^0(D_\alpha)&=\lint_k \, f_\alpha\, \mu_{\,\text{\rm new}},\\
h^1_{\,\text{\rm new}}(D_\alpha)&=- \lint_{\A/k}\, f_{\alpha, \A/k}\,\mu_{\,\text{\rm new}},
\\
 \chi_{\,\text{\rm new}}(D_\alpha)&=  \!\! \lint_{\A}\, f_\alpha \,\mu_{\,\text{\rm new}}\,= \chi(D_\alpha) .   \end{aligned}
$$

Similarly to 1-9, we get  
 $$h^j_{\,\text{\rm new}}(D_\alpha)=h^{1-j}_{\,\text{\rm new}}(D_{\gk\alpha^{-1}}),\qquad  \chi_{\,\text{\rm new}} (D_\alpha)=-\chi_{\,\text{\rm new}}(D_{\gk\alpha^{-1}}).
 $$ 
 
The function $h_{\,\text{\rm new}}$ with respect to the new normalisation of the measures
satisfies $h^j_{\,\text{\rm new}}=h^j-c$ where $c=\log \mu_{\,\text{\rm new}}(\A/k)$; in particular $c=h^1(D_1)$ in the geometric case. 

\medskip

Following the same technique, while using another natural normalisation of the measures, one can establish the relative Riemann--Roch theorem for arithmetic and geometric curves. See \cite{W} for details.

\bigskip

\noindent {\bf 2. Geometric adeles on surfaces}

\bigskip 

\noindent We continue to use the notation of 1-1.

\smallskip
 
\noindent {\bf 2-1}.  
Let $S$ be either  (1) a proper, smooth, geometrically integral surface over a finite field $\Bbb F_q$ (geometric case) 
or 
 (2)   a regular, integral, proper and flat over $B=\text{\rm Spec}(O_k)$, $k$ a number field, scheme with fibre dimension one, whose generic fibre is a smooth projective, geometrically irreducible curve $C$ over   $k$ (arithmetic case). 
 Denote by $K$ the function field of $S$.

Geometric  adeles on $S$, without the archimedean data part, were first (correctly) defined in \cite{B} and their first study was conducted by various researchers. 
In case (1)  they were studied using their topology in \cite{ARR}. 
The full definition  in  case (2) was first given in \cite{CD}, it includes the archimedean data. The definition   in \cite{CD} needs to be corrected: at real archimedean data one uses the  two-dimensional local fields with the coefficient field $\R$, see below. 

 \smallskip

In the geometric case we use the word \lq curves\rq\  for  irreducible proper  curves. In the arithmetic case  we use the word \lq curves\rq\  for irreducible proper vertical curves, irreducible regular horizontal curves together with their archimedean points, and all  \lq archimedean fibres\rq\   $S_v=S\times_B k_v$, $v$ is an archimedean place and $k_v$ is the completion of $k$. 

\smallskip 

Recall that two-dimensional local fields are complete discrete valuation fields whose residue field is a one-dimensional local field (non-archimedean or archimedean), for surveys of various results about them see  \cite{FK} and unpublished \cite{MM1}.

In this subsection we will define the objects $\bA_y^0$ and $\bA_y$ for curves $y$ on $S$, and then geometric adeles $\bA$. It will take some time.

\begin{Definition}
Let $y$ be an  \lq archimedean fibre\rq\   $y=S_v$ over $k_v$. Define $K_y$ as the function field of $S_v$. 
The field $K$ is canonically embedded in $K_y$. 
For a closed point $x$ of $y$ define $\cO_{x,y}$ as the completion of the local ring of $S_v$ at $x$ and define $K_{x,y}$ as $\cO_{x,y}\otimes K$. The field $K_y$ is canonically embedded in 
the  two-dimensional local field $K_{x,y}$.

Define the object $\bA_y$   as the usual (one-dimensional) adelic ring of the curve $S_v$ over $k_v$ and define $\bA_y^0$ as the subring of adeles with integral local coordinates. 
\end{Definition}

\begin{Definition}
Let  $y$ be a curve that is not an  \lq archimedean fibre\rq. 
Denote by  $K_y$  the fraction field of the completion $\cO_y$
of the local ring of $S$ at $y$. The field $K_y$ is a complete discrete valuation field with the ring of integers $\cO_y$, its maximal ideal $\cM_y$ and residue field $k(y)$. 
The field $K$ is canonically embedded in $K_y$. 

For a closed point $x$ of $S$  denote by $\cO_x$  the completion of the local ring at $x$. 
For a closed point $x$ of a curve $y$ consider the localisation of $\cO_x$ at the local equation of $y$ at $x$ and complete it with respect to the intersection of its maximal ideals, denote the result by $\cO_{x,y}$ and 
let $K_{x,y}$ be its quotient ring. The ring $\cO_{x,y}$ (resp. $K_{x,y}$) is isomorphic to 
the product of all $\cO_{x,z}$ (resp. $K_{x,z}$) where $z$ runs through all   minimal prime ideals of the completion of the local ring of $y$ at $x$, 
 i.e. through all formal branches  of $y$ at $x$. Denote by $\cM_{x,y}$ the intersection of maximal ideals of $\cO_{x,y}$.

For a horizontal curve $y$ 
the field $k(y)$ is canonically embedded in $\cO_y$. 
Let $x$ be a closed point of $y$, with a local branch $z$ of $y$ at $x$, or let $x$ be an archimedean point of $y$ (then $z=y$), in both cases we get a place $w$ of $k(y)$.
Define $\cO_{x,z}$ as the projective limit of $\cO_y/\cM_y^r \otimes_{k(y)} k(y)_w$, $r>0$,  where $k(y)_w$ is the completion of $k(y)$. Define  $K_{x,z}=K_{w,y}$ as $\cO_{x,z}\otimes K$. When $x$ is a closed point, this produces the objects canonically isomorphic to the objects above. 

 In all these cases $K_y$ is canonically embedded in $K_{x,z}$. 
 The fields $K_{x,z}$, $K_{w,y}$ are  two-dimensional local fields.  
\end{Definition}

\smallskip

\begin{Definition}
Define  $\bA_y^0$   as the {subring} of $\prod_{x\in y} \cO_{x,y}$, including archimedean points $x$ on horizontal curves $y$ in the arithmetic case, such that for every positive integer $r$ for almost all closed points $x$ of  $y$  the $(x,y)$-component is in $\cO_x+\cM_{x,y}^r$.  
Define the adelic object $\bA_y$  associated to the curve $y$ as the minimal subring of the product of $K_{x,y}$, for all points $x$ (archimedean included) of $y$, containing $\bA_y^0$ and $K$. 
\end{Definition}
    
    \smallskip
    
Then  {the ring} $\bA_y$ is the direct limit of $\bA_y^r=\cM_y^r\bA_y^0$, $r\in\Z$.  

Similarly to sect. 1 of \cite{ARR}, in the case when $k(y)$ and $K$ are of the same characteristic, the definitions imply that $\bA_y\simeq \A_{k(y)}((t_y))$ where $t_y$ is a local parameter of $\cO_y$. 

\smallskip

\begin{Definition}
The {ring of} adeles $\bA$  is the restricted  product of the rings $\bA_y$ for all curves $y$ with respect to their closed subrings $\bA_y^0$, i.e. the additive group of  $\bA$  is the restricted product of the {additive groups of} $\bA_y$ for all curves $y$ with respect to their closed  subgroups $\bA_y^0$ and the multiplicative group of  $\bA$ is the restricted product of {the multiplicative groups of } $\bA_y$ for all curves $y$ with respect to their closed subgroups of invertible elements of $\bA_y^0$.
\end{Definition}

Thus, the {ring}  
$\bA$ is a {subring} of the product  $\prod K_{x,y}$ of all two-dimensional local rings $K_{x,y}$, and it can be viewed as specified by the  adelic restricted product conditions described above.

\begin{Definition}
For a closed point $x$ of $S$ denote by $K_x$ be the minimal subring  of $\bA$ which contains $K$ and $\cO_x$, it is a subring of $K_{x,y}$ for any curve $y$ passing through $x$. The ring $K$ is canonically embedded in $K_x$ and  $K_x$ is canonically embedded in $K_{x,y}$. 
\end{Definition}
$$
\xymatrix@!0{
 & &  & {K_{x,y}}\ar@{-}[dl] \ar@{-}[dr]  &
\\
  & & {K_y}\ar@{-}[dr]  & & {K_x}\ar@{-}[dl]  
\\
  & & & K  & 
}
$$

 \medskip
 
\noindent {\bf 2-2}.  
We now define the topological structures. Similarly to the one-dimensional adeles, topological considerations play an important role in the study of two-dimensional adeles. 

\smallskip

  Recall that a linear topology on a  group is a  translation invariant topology in which the identity element of the group has a basis consisting of some  subgroups. For a group with a topology, the strongest linear topology weaker than the original topology is called its linearisation. 
  
  The topologies of the groups in section 1 are linear. For  groups related to two-dimensional adeles one needs to work with linearised topologies. 
  \smallskip
  
 The induced topology on a subgroup from a linear topology on a group is linear. The quotient topology on the quotient group of a group with linear topology is linear. 
  It is well know that the product topology  of groups endowed with linear topologies is linear and the inverse limit topology of groups endowed with linear topologies is linear. 
  
  Recall that an abelian topological group $G$ is the linear direct  limit of abelian topological groups $G_i$ if it is their direct limit algebraically, its topology is linear and their preimages of its open subgroups in each $G_i$ are open subgroups of $G_i$. In other words, the topology of the linear direct limit is the linearisation of the topology of the direct limit. The linear direct  limit topology on groups showing up in the two-dimensional theory can be not equivalent to the direct limit topology, see e.g. Example following the second definition in 6.2 Part I of \cite{FK}. 

\begin{Definition}The topology of the restricted product $\prod'_{i\in I} G_i$ of linear topological groups $G_i$ with respect to their closed subgroups $H_i$ is defined as the linear direct limit topology of  $G=\varinjlim G_J$, where $G_J=\prod_{i\in J} G_i \prod_{i\not\in J} H_i$ are endowed with the product topology (hence linear), and $J$ runs through  finite subsets of $I$. 
\end{Definition}

\begin{Definition}
For a closed point $x$ of $S$ define the  topology of $\cO_x$ as the $\cM_x$-adic topology, where $\cM_x$ is the maximal ideal of $\cO_x$. This is a linear topology and we also get the linear topology of the $\cM_x$-module $a\cO_{x}$ and of 
 $(a\cO_{x}+\cM_{x,z}^m)/\cM_{x,z}^m$, $a\in \cO_{x,z}$, where $\cM_{x,z}$ is the maximal ideal of $\cO_{x,z}$.  Define the linear topology of 
$\cO_{x,z}/\cM_{x,z}^m$ as the linear direct limit topology of  $(a\cO_{x}+\cM_{x,z}^m)/\cM_{x,z}^m$ with $a$ running through all elements of  $\cO_{x,z}$.
Define the  topology of $\cO_{x,z}$ as the inverse limit topology of the linear topologies of  $\cO_{x,z}/\cM_{x,z}^m$, it is linear. Define the canonical topology of $K_{x,z}$ as the linear direct limit of the linear topologies of $b\cO_{x,z}$, $b$ runs through all elements of $K_{x,z}$.
\end{Definition}

\begin{Definition}
Suppose that $k(y)$ and $K$ are of the same characteristic. Then one can  view $\cO_y/\cM_y^r \otimes_{k(y)} k(y)_w$ is a finite dimensional vector space over the topological field $k(y)_w$, so it has its canonical  topology and it is linear.  Define the topology of $\cO_{x,z}$ as the inverse limit topology of the linear topologies of  $\cO_y/\cM_y^r \otimes_{k(y)} k(y)_w$, and the topology of $K_{x,z}$ is the linear direct limit topology of the linear topologies of $b\cO_{x,z}$, $b$ runs through all elements of $K_{x,z}$. 
\end{Definition}
   
The latter definitions in the equal characteristic case, unlike the definitions before them, also work for  archimedean points $x$ of $y$, thus defining the canonical topologies of the archimedean (viewed two-dimensionally) objects $\cO_{x,z}$, $K_{x,z}$, $K_{w,y}$. 

When $x$ is a closed point of $S$, it is an easy verification that the topologies in the latter definition coincide with  the previously defined topologies.

In the equal characteristic case these topologies are equivalent to the topologies defined in  \cite{ARR}.

For a variety of different topologies on higher local fields see \cite{St}. 

The canonical two-dimensional topology on the additive group of two-dimensional local fields induces the usual one-dimensio\-nal topology on the additive group of their residue fields.

\smallskip

\begin{Definition} Define the canonical  topology of the additive groups of $\bA_y^r$ and $\bA_y$ as the induced  topology  from the   product topology of the canonical linear topologies of $K_{x,y}$. It is linear.  
\end{Definition}
    
\smallskip

The definitions imply that $\bA_y^r$ is   isomorphic  to the  inverse limit  of linear topological groups   $\bA_y^r/\bA_y^{r+s}$, $s>0$, endowed with the  quotient topology, 
and that  the canonical topology of $\bA_y$ is the linear direct limit of the topologies of $\bA_y^r$.

For an   \lq archimedean fibre\rq\   $y$ the two-dimensional topology of $\bA_y$ takes into account the topology of the archimedean completion of $k$, so in particular it is {\em different from} the one-dimensional adelic topology of $\bA_y$.

\smallskip 

\begin{Definition}Define  the canonical topology of the additive group $\bA$ as  the induced   topology from the  product topology of the additive groups of  $\bA_y$. It is linear. 
\end{Definition}

\begin{Definition}
Endow closed subgroups of the additive group $\bA$ with its canonical topology, with the induced  topology; it is linear. 
One immediately checks that the latter  on the subgroups on which the topology was already defined is equivalent to it.
Endow the multiplicative group of invertible elements $E^\times$ of a  subring $E$ of $\bA$ with the induced topology
from the product $E\times E$ via $E^\times \To E\times E$, $\alpha\mapsto (\alpha,\alpha^{-1})$.  
\end{Definition}

One easily checks that 
in positive characteristic the topology of  $\bA$ is equivalent to the topology defined in \cite{ARR}.

\medskip
 
\noindent {\bf 2-3}.  
The {ring of} adeles $\bA$ has two important subobjects: $\bB$ associated to all curves on $S$ and $\bC$  associated to all  closed points and archimedean points on $S$.

\begin{Definition}
Define the subobject $\bB$  as the intersection of the product $\prod K_y$ of the fields $K_y$, for all curves $y$ as in 2-1, with $\bA$ inside the product $\prod K_{x,y}$ of all two-dimensional local rings $K_{x,y}$. 
\end{Definition}

\begin{Definition}
To define subobject $\bC$ in the arithmetic case, we need, in addition to the definition of $K_x$ for a closed point $x$ of $S$ in 2-1,  to define the ring $K_x$ for  a closed point $x$  of an  \lq archimedean fibre\rq\   $S_v$. Consider two cases. If $x$ is algebraic over $k$ (i.e. $k(x)$ is a finite extension of $k$) then   
 $x$ lies on the unique  horizontal curve  $y$ containing $x$. 
The field  $K_{x,S_v}$ 
 is a complete discrete valuation field whose residue field and the coefficient subfield is  also the coefficient subfield of $K_{x,y}$,
while a local parameter of $S_v$ at $x$ serves as a local parameter of the complete discrete valuation field $K_{x,y}$.
Thus,  the complete discrete valuation fields $K_{x,S_v}$ and $K_{x,y}$ are canonically isomorphic. 
Define $K_x$  as the diagonal in the product of $K_{x,y}$ and $K_{x,S_v}$. 
If  $x$ is a transcendental point of $S_v$ (i.e. $k(x)$ is not an algebraic extension of $k$)
 define $K_x$ as the ring of integers of $K_{x,S_v}$ with respect to its discrete valuation. 
 
Define the adelic subobject $\bC$ as the intersection of the product  $\prod K_x$ of the rings $K_x$, for all closed points of $S$ and closed points of \lq archimedean fibres\rq, with $\bA$ inside the product $\prod K_{x,y}$ of all two-dimensional local rings $K_{x,y}$. 
\end{Definition}

This definition  corrects the definition in \cite{CD} of the subobject $\bC$ in the archimedean case.  

\smallskip
 
The function field $K$ of $S$ is canonically diagonally embedded in  $\bB$ and in  $\bC$ and in $\bA$.  For the  inclusion in $\bC$  note that the image of $K$ is in the ring of integers of $K_{x,S_v}$
 for transcendental points $x$. 
 $$
\xymatrix@!0{
 & &  & {\bf A}\ar@{-}[dl] \ar@{-}[dr]  &
\\
  & & {\bf B}\ar@{-}[dr]  & & {\bf C}\ar@{-}[dl]  
\\
  & & & K  & 
}
$$

One defines similarly rational adeles $\bA^{rt}$ whose $(x,y), (x,S_v),(w,y)$-components are in $K$. The rational adeles are dense in $\bA$.

\smallskip

\begin{Definition}
Define the subobject $\bA(0)$ as the intersection of $\bA$ with the product of $\bA_y^0$ for all curves that are not  \lq archimedean fibres\rq\   and of $\bA_y$ for  \lq archimedean fibres\rq\   in the arithmetic case. 
\end{Definition}

\begin{Definition}
Define $\bB_y=K_y$. Define $\bB_y^r$ as the intersection of $\bB_y$ with $\bA_y^r$ inside $\bA_y$. 
\end{Definition}

 \medskip
 
\noindent {\bf 2-4}.  We now define a non-trivial character $\psi$ of $\bA$, its continuity follows from the definitions.

\begin{Definition}

In the geometric case a non-zero $\omega\in \Omega^2_{K/\F_q}$ produces the residue map $\bA\to \F_q$, \cite{ARR}. Compose it with the trace to $\F_p$ and an injective homomorphism from $\Bbb F_p$ to $p$th roots of unity in the complex unit circle $S^1$ in order to get a character $\psi$ of $\bA$. In the arithmetic case for a non-zero $\omega\in \Omega^1_{K/k}$  use the  associated residue map $\bA\to\A_k$ composed with  a non-trivial character $\psi_0\colon\A_k\to S^1$ vanishing on $k$  to get a character $\psi$ of $\bA$, sect. 28 of \cite{AoA2}, \cite{MM2}, \cite{CD}.  Recall that at closed points $x$ of an  \lq archimedean fibre\rq\   $S_v$ one adds the minus sign in front of the residue map for the local object associated to $x$ on $S_v$,  \cite{CD}. 
\end{Definition}

The  character $\psi$ of $\bA$ vanishes on $\bB$ and $\bC$, due to the additive reciprocity law for curves and points of $S$, \cite{AoA2}, \cite{MM2}, \cite{CD}. 

Using the  character $\psi$ of $\bA$  we get  the  pairing 
$$\langle\,\,,\,\,\rangle=\langle\,\,,\,\,\rangle_\psi\colon\bA\times\bA\To S^1, \quad (a,b)\mapsto \psi(ab).$$

 This pairing is continuous and non-degenerate, see \cite{ARR} and \cite{CD}.

\smallskip

 Using this pairing and similar pairings for $\bA_y$, $K_{x,y}$, it was established in \cite{ARR} (its argument has an obvious extension to the arithmetic case as well) and  \cite{CD}, that the {\em additive topological group of $K_{x,y}$, $\bA_y$, $\bA$   is  isomorphic to the group  of its characters endowed with the  compact-to-open topology. In other words, those groups are algebraically and topologically self-dual.}

 \smallskip
 
 \noindent {\bf 2-5}.  
 The paper  \cite{Ka} established an extension of Pontryagin duality from a category of locally compact abelian groups to  the categories of countable direct limits of locally compact abelian groups and of countable inverse limits of locally compact abelian groups. Similarly to (3) of Theorem in sect. 2 of \cite{ARR}, we have

\begin{prp} 
For every  subgroup $B$ of $\bA$ its complement $B^\bot$ with respect to $\langle\,\,,\,\,\rangle$  is a closed subgroup.
For every closed subgroup $B$ of $\bA$ we have $(B^\bot)^\bot=B$
and $B$ is topologically isomorphic  to the group of characters of $\bA/B^\bot$. 
For two subgroups  $B,C$ we have  $(B + C)^\bot=B^\bot\cap C^\bot$.
For two closed subgroups $B,C$ we have $(B\cap C)^\bot=B^\bot+C^\bot$ and $B+C$ is closed.
For two closed subgroups $B\supset C$ the group $X(B/C)$ of characters of  $B/C$  is  topologically isomorphic to $X(C^\bot/B^\bot)$.
\end{prp}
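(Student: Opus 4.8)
The plan is to deduce all five assertions from the self-duality of $\bA$ recorded in 2-4 together with the extended Pontryagin duality of \cite{Ka} recalled in 2-5. Write $\Phi\colon\bA\to X(\bA)$, $\Phi(a)=\langle a,\,\cdot\,\rangle=\psi(a\,\cdot\,)$, for the topological isomorphism of $\bA$ onto its character group. By \cite{Ka} (and, in positive characteristic, \cite{ARR}) the group $\bA$ sits in a category of countable direct limits of countable inverse limits of locally compact abelian groups on which $X$ is an exact contravariant autoequivalence; in particular $\bA$ is reflexive via the canonical evaluation $\bA\to X(X(\bA))$, every closed subgroup and every separated quotient of $\bA$ is again an object of this category, and for a closed subgroup $B$ the restriction and inflation maps give topological isomorphisms $B^\bot\cong X(\bA/B)$ and $\bA/B^\bot\cong X(B)$, where $\Phi$ identifies $B^\bot\subseteq\bA$ with the subgroup of characters of $\bA$ vanishing on $B$. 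Assertions (1) and (3) require none of this machinery: for fixed $b$ the map $a\mapsto\langle a,b\rangle$ is continuous and $\{1\}$ is closed in $S^1$, so $B^\bot=\bigcap_{b\in B}\{a:\langle a,b\rangle=1\}$ is an intersection of closed subgroups, hence a closed subgroup, and continuity also yields $B^\bot=(\overline B)^\bot$; bi-additivity of the pairing gives $(B+C)^\bot=B^\bot\cap C^\bot$ at once, using $B,C\subseteq B+C$ for one inclusion and $\langle a,b+c\rangle=\langle a,b\rangle\langle a,c\rangle$ for the other.

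For (2) I would first establish $(B^\bot)^\bot=B$ for closed $B$. Since the pairing is symmetric, $\psi(ab)=\psi(ba)$, the image $\Phi(B^\bot)\subseteq X(\bA)$ is exactly the annihilator of $B$, and the annihilator of $\Phi(B^\bot)$ inside $X(X(\bA))$ pulls back under the evaluation isomorphism $\bA\cong X(X(\bA))$ to $\overline B=B$; this is the standard double-annihilator computation, now legitimate because reflexivity holds in the category of \cite{Ka}. The same computation applied to an arbitrary subgroup $D$, combined with $D^\bot=(\overline D)^\bot$, gives the general identity $(D^\bot)^\bot=\overline D$. Applying the isomorphism $(\,\cdot\,)^\bot\cong X(\bA/(\,\cdot\,))$ to the closed subgroup $B^\bot$ then yields $(B^\bot)^\bot\cong X(\bA/B^\bot)$, and since $(B^\bot)^\bot=B$ this is precisely the claimed $B\cong X(\bA/B^\bot)$.

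For (4), dualizing (3) and using $(D^\bot)^\bot=\overline D$ gives, for closed $B,C$, both the trivial inclusion $B^\bot+C^\bot\subseteq(B\cap C)^\bot$ and the identity $(B\cap C)^\bot=\overline{B^\bot+C^\bot}$, the latter by taking $\bot$ of $(B^\bot+C^\bot)^\bot=(B^\bot)^\bot\cap(C^\bot)^\bot=B\cap C$. Hence the whole assertion reduces to the statement that the sum of two closed subgroups is closed: applied to the pair $B^\bot,C^\bot$ this removes the closure and delivers $(B\cap C)^\bot=B^\bot+C^\bot$. To see that $B+C$ is closed I would factor the quotient map $q\colon\bA\to\bA/B$ and note that $B+C=q^{-1}(q(C))$, so it is enough that the image $q(C)$ of the closed subgroup $C$ is closed in $\bA/B$, i.e. that the continuous homomorphism $C\to\bA/B$ is strict. \textbf{This is the main obstacle}: strictness can fail for general topological groups, and here it must be extracted from the structure of $\bA$ as an ind--pro-object of locally compact groups, through the open-mapping and strictness properties of morphisms in the duality category of \cite{Ka} and \cite{ARR} (concretely, by descending to the locally compact layers $\bA_y^r/\bA_y^{r+s}$ and passing to the direct and inverse limits compatibly). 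Granting strictness, $q(C)$ is closed, hence so is $B+C$, which completes (4).

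For (5), let $B\supseteq C$ be closed, so $C^\bot\supseteq B^\bot$. The short exact sequence $0\to C\to B\to B/C\to 0$ lies in the category of \cite{Ka}, and applying the exact functor $X$ produces $0\to X(B/C)\to X(B)\to X(C)\to 0$. Under $\Phi$ the map $X(B)\to X(C)$ becomes the natural projection $\bA/B^\bot\to\bA/C^\bot$, whose kernel is $C^\bot/B^\bot$, so $X(B/C)\cong C^\bot/B^\bot$ topologically. Equivalently, the pairing descends to a continuous non-degenerate pairing $B/C\times C^\bot/B^\bot\to S^1$, well defined because $B^\bot$ annihilates $B$ and $C^\bot$ annihilates $C$, which by reflexivity realizes $B/C$ and $C^\bot/B^\bot$ as each other's character groups; this is the final assertion.
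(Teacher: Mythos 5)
Your architecture is the same as the paper's (Kaplan's extended duality plus formal annihilator computations), and your handling of the first and third sentences is correct and genuinely elementary. But the package you take as given from \cite{Ka} is precisely what the paper's proof has to construct. Kaplan's theorems concern two categories: countable direct limits of locally compact abelian groups, and countable inverse limits of such; $\bA$ belongs to neither, and no mixed ``direct limits of inverse limits'' category with an exact self-duality, stable under closed subgroups and Hausdorff quotients and equipped for free with the restriction/inflation isomorphisms $B^\bot\cong X(\bA/B)$ and $\bA/B^\bot\cong X(B)$, is available in \cite{Ka}. The paper's actual argument is a d\'evissage: $\bB'$ is a countable inverse limit of the discrete groups $\bB'/\bB(D)'$, while $\bA'/\bB'$ is a countable direct limit of the compact groups $(\bA(D)'+\bB')/\bB'$ --- the latter being the compactness statement of 2-9, which the paper is careful to note is proved independently so as to avoid circularity. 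Kaplan's Th.\ 1 and Th.\ 2 then give, for these two groups, that characters separate closed subgroups from outside points and that characters extend from closed subgroups; these two properties are transferred to the extension $\bA'$ and then to $\bA$, and only then do the formal arguments (the paper invokes those of Prop.\ 8, 11, 12 of \cite{BHM}) produce the second sentence and the remaining ones. Note also that your appeal to reflexivity is too weak even granting membership in a dualizable category: reflexivity of a topological abelian group does not imply that a closed subgroup equals its double annihilator --- that needs exactly the separation property which the d\'evissage supplies.

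On the fourth sentence you correctly identify closedness of $B+C$ as the main obstacle, but ``granting strictness'' leaves a genuine hole, and your proposed route (open-mapping/strictness properties of morphisms ``in the duality category'', descending to the layers $\bA_y^r/\bA_y^{r+s}$) is not substantiated and cannot be made purely formal: in $\R^2$, which enjoys perfect Pontryagin duality and satisfies the first three sentences of the Proposition, a line of irrational slope and the lattice $\Z^2$ are closed subgroups whose sum is dense and proper, hence not closed. So no amount of separation, extension, or reflexivity alone yields closedness of sums; any proof must exploit the specific duality package established for $\bA$. The paper does not reduce this to a strictness claim at all: it derives the fourth and fifth sentences from the second via the cited arguments of \cite{BHM}, in which $(B\cap C)^\bot=B^\bot+C^\bot$ and the closedness of the sum are obtained together. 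Your derivation of the fifth sentence is formally fine once the full package (exactness of $X$ and the two quotient identifications) is in hand, but, as with the second sentence, it rests on inputs your proposal assumes rather than proves.
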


\begin{proof} The definition of $\bB'$ implies that it is isomorphic to the countable inverse limit of $\bB'/\bB(D)'$
 with respect to all divisors $D$. It will be established in 2-9, without using this proposition and its use, that $\bA'/\bB'$ is isomorphic to the countable  direct limit of compact $(\bA(D)'+\bB')/\bB'$ with respect to all divisors $D$. 
Hence, by  Th.1 and Th.2 of \cite{Ka},   characters of these groups separate closed subgroups and points outside them,  and characters of these groups extend from closed subgroups to those groups. Hence $\bA'$ has the same properties, and hence $\bA$ has the same properties. 
Then the arguments of Prop. 8, Prop. 11 and Prop. 12 of \cite{BHM} imply the property in second sentence, which then implies the rest of the properties.
\end{proof}

\smallskip

Papers \cite{MM2} and  \cite{CD} established  the strong additive reciprocity 

\begin{thm} 
 The (orthogonal) complement $\bB^\perp$ of $\bB$ with respect to the pairing $\langle\,\,,\,\,\rangle$ coincides with $\bB$, and the  complement $\bC^\perp$ of $\bC$ with respect to the pairing $\langle\,\,,\,\,\rangle$ coincides with $\bC$. 
 \end{thm}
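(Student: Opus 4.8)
The plan is to prove the two statements $\bB^\perp=\bB$ and $\bC^\perp=\bC$ in parallel, in each case separating the argument into an easy inclusion coming directly from the reciprocity laws and a harder reverse inclusion that encodes their non-degeneracy. For the easy direction, recall that $\psi$ vanishes on $\bB$ and that $\bB$ is a subring of $\bA$: for $b,b'\in\bB$ one has $bb'\in\bB$, hence $\langle b,b'\rangle=\psi(bb')=1$, so $\bB\subseteq\bB^\perp$; the same computation with the subring $\bC$ gives $\bC\subseteq\bC^\perp$. By the Proposition above, $\bB^\perp$ and $\bC^\perp$ are closed and $(\bB^\perp)^\perp=\bB$, $(\bC^\perp)^\perp=\bC$ since $\bB$, $\bC$ are closed; but these formal facts are symmetric in $B$ and $B^\perp$, and so cannot by themselves upgrade the inclusions to equalities. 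The genuine content is therefore the reverse inclusions $\bB^\perp\subseteq\bB$ and $\bC^\perp\subseteq\bC$, and for these I would descend to the local pairings.

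The reduction I have in mind uses the restricted-product structure of $\bA$ and the local self-dualities recorded in 2-4. First I would fix a curve $y$ and test an element $a\in\bB^\perp$ against the subring $K_y=\bB_y\subseteq\bB$, supported on the single curve $y$. The pairing $\langle a,b\rangle=\psi(ab)$ then collapses to the residue pairing along $y$ between the $y$-component $a_y\in\bA_y$ and $K_y$, so $a_y$ lies in the complement of $K_y$ inside $\bA_y$. Using the one-dimensional-type self-duality of $\bA_y$ recorded in 2-4 --- concretely the isomorphism $\bA_y\simeq\A_{k(y)}((t_y))$ in the equal-characteristic case, under which the reciprocity for the global field $k(y)$ applies coefficientwise --- one gets $K_y^\perp=K_y$ in $\bA_y$, hence $a_y\in K_y$ for every $y$. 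The restricted-product conditions defining $\bA$ then place the collection $(a_y)$ in $\bB$, giving $\bB^\perp\subseteq\bB$. The treatment of $\bC$ is parallel, testing $a\in\bC^\perp$ against the rings $K_x$ supported at single points and invoking the self-duality of the two-dimensional local fields $K_{x,y}$ together with the fact that $\psi$ is assembled from the point-wise residue maps with the corrected archimedean sign convention of 2-3, so that the local reciprocity around each $x$ is non-degenerate.

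The main obstacle is that the pairing on $\bA$ does not factor as an independent product over curves: distinct curves $y,y'$ share the two-dimensional local components $K_{x,y}$ at points $x\in y\cap y'$, so the \lq curve by curve\rq\ isolation above is not literally legitimate and must be justified by showing that the cross-terms at intersection points cancel. This is exactly where the second reciprocity law --- the vanishing of the sum of residues around a fixed point $x$, which makes $\psi$ vanish on $\bC$ --- has to be combined with the first, and coordinating the two laws is the delicate step; it is the substance of the \lq strong\rq\ additive reciprocity established in \cite{MM2} and \cite{CD}. I would organise this coordination through the self-duality of $\bA$ furnished by the Proposition together with the relation $\bB\cap\bC=K$: once both easy inclusions and the local non-degeneracies are in place, the duality $B\simeq X(\bA/B^\perp)$ lets one compare the sizes of $\bB^\perp/\bB$ and $\bC^\perp/\bC$ against each other and against $\bA/(\bB+\bC)$, forcing both quotients to be trivial simultaneously.
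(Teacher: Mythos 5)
Your proposal has genuine gaps, and it also misjudges where the difficulty lies. First, note what the paper actually does: it does not reprove this theorem. The statement is the strong additive reciprocity established in \cite{MM2} and \cite{CD}, and the only argument the paper supplies is the verification that the proof of \cite{CD} survives the corrected definition of $\bC$ given here: at a transcendental point $x$ of an archimedean fibre $S_v$, the object $K_x$ is not a diagonal copy of a field but the ring of integers of $K_{x,S_v}$, and one must check that the conductor of $\psi_{x,S_v}$ at such a point is exactly that ring of integers (because polynomials over $K$ have no zeros at transcendental points), so that this ring is its own complement. Your sketch never touches this case; your recipe of ``test against $K_x$ and invoke self-duality of the two-dimensional local fields'' does not even state the correct local assertion there. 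So the one piece of proof the paper actually contains is absent from your proposal.

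As a from-scratch strategy the sketch also fails at the points you gloss over, while the obstacle you single out as the ``delicate step'' does not exist. The adeles $\bA$ sit inside the product $\prod K_{x,y}$ indexed by \emph{flags} $(x,y)$; for two curves $y\neq y'$ through the same point $x$, the rings $K_{x,y}$ and $K_{x,y'}$ are distinct components of this product, multiplication in $\bA$ is componentwise, and hence pairing $a\in\bA$ against an adele supported on the single curve $y$ involves only the flags of $y$ --- there are no cross-terms to cancel, and your per-curve reduction for $\bB$ is legitimate exactly as stated. The real gaps are elsewhere: (i) the local input $K_y^\perp=K_y$ in $\bA_y$ is only accessible by your Laurent-series argument when $k(y)$ and $K$ have equal characteristic; for vertical curves of an arithmetic surface $k(y)$ has characteristic $p$ while $K$ has characteristic $0$, $\bA_y\not\simeq\A_{k(y)}((t_y))$, and this mixed-characteristic duality is precisely the substance of \cite{MM2}, \cite{CD}, not something you may ``invoke''; (ii) for $\bC$ the needed statement is not self-duality of the individual fields $K_{x,y}$ but self-duality of the diagonal $K_x$ inside the restricted product over all curves and branches through $x$ --- an adelic reciprocity around a point which your proposal never establishes; (iii) your closing step, comparing the ``sizes'' of $\bB^\perp/\bB$, $\bC^\perp/\bC$ and $\bA/(\bB+\bC)$ so as to force both quotients to be trivial, is not an argument, and it risks circularity, since in the paper the identification $K^\perp=\bB+\bC$ (Corollary 2-6) is \emph{deduced from} this theorem rather than available before it.
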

 
 The argument in \cite{CD}  works for  the subobject $\bC$ defined in this paper. Indeed, 
since one chooses a non-zero differential form in $\Omega_{K/k}$ to define the character $\psi$ of $\bA$, the conductor of the  character $\psi_{x ,S_v}$ at every transcendental point $x$ is the ring of integers of $K_{x,S_v}$, since polynomials over $K$ do not have zeros in transcendental over $K$ points; hence the  complement of the ring of integers of $K_{x,S_v}$ with respect to $\psi_{x, ,S_v}$ equals  it; thus, the argument of  \cite{CD} implies that the  complement of $\bC$ with respect to $\langle\,\,,\,\,\rangle$ equals $\bC$. 

\smallskip

The previous theorem implies that $\bB$ and $\bC$, as the orthogonal complements, are closed in  $\bA$.

 \medskip
 
\noindent {\bf 2-6}.  
The homomorphism from the characters of the group $\bA/(\bB+\bC)$ to the  characters of  
$\bA^{rt}/(\bB\cap \bA^{rt}+\bC\cap\bA^{rt})$, induced by $\bA^{rt}\to \bA$, is injective since its kernel is closed and the closure of $\bA^{rt}$ is  $\bA$. 
Theorem of sect. 2 of \cite{ARR} shows that $\bB\cap\bC=K$ in the geometric case. 
Exactly the same argument as in its proof works in the arithmetic case. Hence 

\begin{prp}
$\bB\cap\bC=K$ both in the geometric and  arithmetic cases. 
\end{prp}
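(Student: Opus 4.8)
The plan is to reduce the statement to its geometric-case counterpart, Theorem of sect.~2 of \cite{ARR}, which asserts exactly $\mathbf{B}\cap\mathbf{C}=K$ for a proper smooth surface over a finite field, and then to check that the argument there survives the passage to the arithmetic setting. First I would recall the local picture. An element $\xi\in\mathbf B$ is, by the definition in 2-3, a family $(\xi_y)$ with $\xi_y\in K_y$ for every curve $y$, while an element of $\mathbf C$ is a family $(\xi_x)$ with $\xi_x\in K_x$ for every closed point $x$ (and every closed point of an archimedean fibre). An element lying in both is therefore an adele whose $(x,y)$-component simultaneously comes from the discrete-valuation field $K_y$ along every curve and from the ring $K_x$ at every point. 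The inclusion $K\subseteq\mathbf B\cap\mathbf C$ is immediate from the diagonal embeddings already recorded after the definition of $\mathbf C$, so the whole content is the reverse inclusion $\mathbf B\cap\mathbf C\subseteq K$.

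For the reverse inclusion I would argue that an element $\xi\in\mathbf B\cap\mathbf C$ is a global rational function. The key local fact is that for a fixed point $x$, membership of the $x$-component in $K_x$ forces regularity of $\xi$ at $x$ along \emph{all} branches of \emph{all} curves through $x$ simultaneously, by the definition of $K_x$ as the minimal subring of $\mathbf A$ containing $K$ and $\mathcal O_x$. Combining this over all $x$ with the constraint that along each curve $y$ the component is a single element of $K_y$ (not merely an adele over the points of $y$), one concludes that $\xi$ is a consistent compatible family of local sections, hence glues to a global section of the structure sheaf localised away from a divisor, i.e. a rational function on $S$. Concretely, I expect this to be phrased as: the $\mathbf B$-condition gives, for each curve $y$, a rational function on $y$'s formal neighbourhood, the $\mathbf C$-condition gives compatibility at every point, and a density/separatedness argument identifies the resulting object with an element of $K$ diagonally embedded. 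This is precisely the mechanism of the \cite{ARR} proof.

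The point of the proposition, and the only thing that really needs checking, is that nothing in this gluing argument uses the geometric hypothesis in an essential way. I would verify that the proof of \cite{ARR} is written at the level of the local two-dimensional fields $K_{x,y}$, the discrete-valuation fields $K_y$, and the subrings $K_x$, none of whose defining properties (2-1) distinguish the geometric from the arithmetic case, except at horizontal curves and archimedean fibres. The main obstacle, and the place where I would spend the most care, is therefore the treatment of the archimedean data: one must confirm that the components of $\xi$ at closed points $x$ of archimedean fibres $S_v$, governed by the corrected definition of $K_x$ in 2-3 (the diagonal in $K_{x,y}\times K_{x,S_v}$ for algebraic $x$, the ring of integers of $K_{x,S_v}$ for transcendental $x$), impose no condition that is either weaker or stronger than regularity of a genuine global $K$-element at the corresponding place. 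Because the embedding of $K$ into $\mathbf C$ was arranged precisely so that the image lands in the ring of integers at transcendental points, the archimedean conditions are exactly compatible with $\xi\in K$, and no new obstruction appears. Once this compatibility is confirmed, the verbatim argument of \cite{ARR} yields $\mathbf B\cap\mathbf C\subseteq K$, completing the proof in both cases.
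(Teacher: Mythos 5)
Your top-level strategy coincides with the paper's: the paper's entire proof of this proposition is the citation of the Theorem of sect.~2 of \cite{ARR} for the geometric case together with the assertion that exactly the same argument works in the arithmetic case, and your decision to concentrate the verification on the archimedean data --- the corrected definition of $K_x$ at closed points of the fibres $S_v$, with the observation that the diagonal image of $K$ lands in the ring of integers of $K_{x,S_v}$ at transcendental points --- matches the only remark the paper itself makes about this (after the definition of $\bC$ in 2-3). So as a plan of reduction, your proposal is the same as the paper's.

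However, the ``key local fact'' on which you build your reconstruction of the \cite{ARR} mechanism is false as stated. You claim that membership of the $x$-component in $K_x$ \emph{forces regularity} of $\xi$ at $x$ along all branches of all curves through $x$, ``by the definition of $K_x$ as the minimal subring of $\bA$ containing $K$ and $\cO_x$''. That definition refutes the claim: since $K\subseteq K_x$, every rational function --- including one with a pole at $x$ --- has its diagonal image in $K_x$, so no regularity whatsoever is imposed. What the $\bC$-condition at $x$ actually encodes is a \emph{consistency} condition: the components of $\xi$ along all branches of all curves through $x$ must arise from one single element of the ring generated by $K$ and $\cO_x$, poles allowed. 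Consequently the next step of your sketch (local sections ``glue to a global section of the structure sheaf localised away from a divisor'') does not follow in the way you describe; the passage from point-wise consistency plus curve-wise consistency ($\xi_y\in K_y$ for every curve $y$) to a single global element of $K$ is precisely the non-trivial content of the cited theorem of \cite{ARR}, and the sentence the paper places just before the proposition (injectivity of the restriction of characters of $\bA/(\bB+\bC)$ to the rational adeles $\bA^{rt}$, using that $\bA^{rt}$ is dense in $\bA$) indicates that the intended argument runs through rational adeles and duality rather than through a direct sheaf-gluing. Since both you and the paper ultimately defer this core step to \cite{ARR}, the defect lies in your gloss rather than in the overall reduction, but as written that gloss would not survive being expanded into a complete argument.
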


\smallskip

Using Theorem 2-4  and similarly to the proof of Theorem of sect. 2 of \cite{ARR} we deduce

\begin{crl}
The complement $K^\perp$ of $K$ with respect to $\langle\,\,,\,\,\rangle$ is the closed group $\bB+\bC$.
The  group of characters of  $K$ is isomorphic to $\bA/(\bB+\bC)$ both in the geometric and  arithmetic cases. 
\end{crl}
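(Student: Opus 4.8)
The plan is to reduce the statement to two facts already established: the identity $\bB\cap\bC=K$ (Proposition 2.11) and the strong reciprocity $\bB^\perp=\bB$, $\bC^\perp=\bC$ (Theorem 2-4), fed into the formal duality package of the first Proposition of 2-5. First I would compute $K^\perp$. Since $K=\bB\cap\bC$ and both $\bB$ and $\bC$ are closed — being their own orthogonal complements by Theorem 2-4, and orthogonal complements are closed by that Proposition — the formula $(\bB\cap\bC)^\perp=\bB^\perp+\bC^\perp$ for closed subgroups applies and gives
$$K^\perp=(\bB\cap\bC)^\perp=\bB^\perp+\bC^\perp=\bB+\bC.$$
That $\bB+\bC$ is itself closed is the last clause of the same Proposition for two closed subgroups, so $K^\perp$ is indeed the closed group $\bB+\bC$. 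The argument runs verbatim in the geometric and arithmetic cases, since both inputs hold in both cases.

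For the second assertion I would invoke the self-duality of $\bA$: the pairing identifies the additive group $\bA$ topologically with its character group via $a\mapsto\langle a,\cdot\rangle$ (the self-duality statement of 2-4). Restriction of characters then yields a continuous map $X(\bA)\to X(K)$ whose kernel consists of the characters of $\bA$ trivial on $K$, i.e., under the identification $X(\bA)\cong\bA$, exactly $K^\perp=\bB+\bC$. This map is surjective because characters extend from closed subgroups of $\bA$ to all of $\bA$ — precisely the extension property recorded in the proof of the first Proposition of 2-5 via Kato's duality, applied here to the closed (indeed discrete) subgroup $K$. Hence $X(K)\cong\bA/K^\perp=\bA/(\bB+\bC)$, and since $K$ is discrete this quotient is compact, as expected for $X(K)$.

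The point requiring care — the main obstacle — is the legitimacy of these formal manipulations: before applying $(\bB\cap\bC)^\perp=\bB^\perp+\bC^\perp$ and before asserting the surjectivity of the restriction map, one must verify the closedness hypotheses ($\bB$, $\bC$, and $\bB\cap\bC=K$ all closed) and the character-extension property. These are supplied respectively by Theorem 2-4, the discreteness of $K$, and the first Proposition of 2-5 (whose proof records that characters separate points from, and extend from, closed subgroups); once they are in place, both conclusions of the Corollary follow at once, exactly parallel to the one-dimensional isomorphism $X(k)\cong\A/k$ of 1-9.
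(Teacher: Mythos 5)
Your proposal is correct and takes essentially the same route as the paper, which deduces the Corollary from Theorem 2-4 ($\bB^\perp=\bB$, $\bC^\perp=\bC$), Proposition 2-6 ($\bB\cap\bC=K$), and the duality formalism of Proposition 2-5 (following the proof of the Theorem in sect.~2 of \cite{ARR}), exactly as you do. One small correction: justify the closedness of $K$ needed for the character-extension step by $K=\bB\cap\bC$ with $\bB,\bC$ closed (equivalently $K=(\bB+\bC)^\perp$), not by discreteness of $K$, since discreteness is only established later in Theorem 2-9, whose own statement invokes this Corollary.
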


 \medskip
 
\noindent {\bf 2-7}.  For an archimedean place $v$ let $k_v'$ be the algebraic closure of $k_v$. Denote $S_v'=(S_v\times_{k}
 \Bbb C)(\Bbb C)$ where the product is taken with respect to the embedding of $k$ in $\Bbb C$ corresponding to $v$. The morphism $S_v\To S_v'$ induces the morphism $\bA_{S_v}\To \bA_{S_v'}$.

\begin{Definition}
Let  $y=S_v$. Let $\mu_y$ be the canonical (1,1)-form on the Riemann surface $S_v'$, \cite{dJ}.
 Then the $\mu_y$-volume of the surface is 1.

Let $|\,\,|_v'$ be the unique extension of the normalised absolute value $|\,\,|_v$  on $k_v$, defined in 1-1, to $k_v'$ such that its restriction on $k_v$ coincides with $|\,\,|_v$.

Introduce
  $$\iota=\iota_y\colon  \bA_y \To [0,+\infty)\subset \Bbb R, \quad  
\iota_y(\alpha_y)=\exp\bigl(\int_{S_v'} \log | 
\alpha_y 
(z) |_v' \, \,\mu_y(z) \bigr), \quad 0\mapsto 0,
$$
where $\alpha_y(z)$ is the value (i.e. its image modulo the maximal ideal of  $\cO_{z,S_v'}$) of the $z$-component of the adele $\alpha_y$ viewed as an element of $\bA_{S_v'}$ at the point $z\in S_v'$, with the exception of finitely many points where it does not belong to $\cO_{z,S_v'}$ and its value is not determined. The value $\iota_y(\alpha_y)$ is defined when the function $ \log | \alpha_y (z) |_v' $ of $z$ is integrable against $\mu_y$. The domain of $\iota_y$ is a proper subset of $\bA_y$. This domain includes $\bB_y=k_v(S_v)$. The image of $\iota_y$ is $[0,+\infty)$. 
\end{Definition}

Then $\iota_y(\alpha_y\beta_y)=\iota_y(\alpha_y) \iota_y(\beta_y)$ when the factors on the right are defined.

 \medskip

\noindent {\bf 2-8}. Discrete subgroups of divisors can be viewed as surjective images of appropriate subgroups of locally compact (non-discrete)  multiplicative adeles. 

\begin{Definition}
For an idele $\alpha=(\alpha_y)\in \bB^\times$ define the {\em replete 
divisor} $$D_\alpha=-\sum v_y(\alpha_y) [y]$$ of $S$, where $y$ runs through all  curves in the geometric case and through  all curves in the arithmetic case as in 2-1. Here $v_y$ is the discrete (surjective to $\vZ$) valuation of $\bB_y$ associated to a local parameter $t_y$ of curve $y$ and  $$v_y(\alpha_y):=v(\iota_y(\alpha_y))=-\log|\iota_y(\alpha_y)|_v=- e_v \int_{S_v'} \log | 
\alpha_y 
(z) |_v' \, \,\mu_y(z) $$ 
for the   \lq archimedean fibre\rq\   $y=S_v$  over an archimedean place $v$, $e_v$ was defined in 1-3.

Then $v_y(\alpha_y\beta_y)=v_y(\alpha_y)+v_y(\beta_y)$. 
\end{Definition}

The zero divisor is $D_1$. 

In the geometric case we get the usual divisor. 

When $\alpha\in K^\times$, the replete divisor $D_\alpha$ is  minus  the Arakelov divisor of  $\alpha$.

The maps $$\bB^\times\ni\alpha\longmapsto D_\alpha$$  are surjective homomorphisms   to the group of divisors in the geometric case  
and to the group of replete divisors in the arithmetic case. 
 The substantial difference with the one-dimensional case is that it is not the group of invertible elements of the full ring $\bA$, but of the ring $\bB$, which is used to lift divisors to the topological adelic level. 

\smallskip

 
Below, except 4-6,  $D$ will  denote a usual divisor, while the notation $D_\alpha$ will stand for a replete divisor (which in the geometric case is the usual divisor).

\smallskip

\begin{Definition}
Similarly to 1-1, 
for a  divisor $D=\sum n_y [y]$ define the subgroup $$\bA(D)=\{\beta\in\bA: v_y(\beta)\geq-n_y\},$$ where $y$ runs through all curves in the geometric case and through all horizontal and vertical curves in the arithmetic case.  
In particular,  $\bA(0)=\bA(D_1)$. 
\end{Definition}

This definition can be extended to replete divisors by ignoring their   \lq archimedean fibres\rq\    components; then 
for a replete divisor $D_\alpha$ we have    $\bA(D_\alpha)=\alpha\bA(0)$. 

\smallskip

The topological group  $\bA$ is the  direct  limit of its closed subgroups $\bA(D)$ where $D$ runs over all divisors.

\smallskip

\begin{Definition}
Denote by $\bA'$ the subobject of $\bA$ of adeles which have  zero components on   \lq archimedean fibres\rq. 
 In the geometric case  $\bA'=\bA$. 
 
 In the arithmetic case denote by $\bA^{af}$ the part of $\bA$ for  \lq archimedean fibres\rq\   and by $\bA^{st}$ the part of $\bA$ on curves outside  \lq archimedean fibres\rq. In the geometric case $\bA^{af}$ is empty and $\bA^{st}=\bA$. Then $$\bA=\bA^{st}\times \bA^{af}, \quad \bA'=\bA^{st}\times 0_{\bA^{af}}.$$

 \smallskip

 For any subset $G$ of $\bA$ define $$G(D):=G\cap \bA(D), \quad G':=G\cap \bA'.$$ So $G'(D)=G(D)'$. 
  \end{Definition}
 \smallskip
 
Thus, $\bB'$ is isomorphic to $\bB^{st}$, the part of $\bB$ based outside  \lq archimedean fibres\rq. 
The diagonal image $K^{st}$ of $K$ in  $\bA^{st}$ is isomorphic to $K$.  
The object $\bC'$ coincides with $\bC$ at non-archimedean points and it is zero at  archimedean points of horizontal curves and at closed points of  \lq archimedean fibres\rq.  

\smallskip

 In the arithmetic case from 2-6 we deduce  $\bB'\cap \bC'=K'=0$.

\smallskip

 \begin{Definition}
 Let $\gK$ be the divisor such that 
 the  complement $\bA(0)^\perp$ of $\bA(0)$ with respect to $\langle\,\,,\,\,\rangle_\psi$  is $\bA(\gK)'$. 
  \end{Definition}
In the geometric case the divisor $\gK$ is  linearly equivalent to the canonical divisor. 
In the arithmetic case it {does not correspond} to  the canonical sheaf  of $S$ over $B$:   the definition of the pairing in 2-4 involves the non-trivial character $\psi_0$, so the sheaf associated to $\gK$ is different from the algebraic geometric relative dualising sheaf of $S$ over $B$, since the latter does not take into account the full two-dimensional geometric-arithmetic situation involving the arithmetic character $\psi_0$. 

\smallskip

 Then $$\bA(D)^\perp=\bA(D^\circ)', \quad  \text{\rm where $D^\circ=\gK-D$}. 
 $$
 
 Similarly to the proof of Theorem of sect. 2 of \cite{ARR}, the results of 2-4 imply that 
 $$\bB(D)'+\bC(D)'=(\bB(D^\circ)\cap \bC(D^\circ))^\perp$$ and hence it is closed.

 \medskip 
 
\noindent {\bf 2-9}.
For every curve $y$ which is not an  \lq archimedean fibre\rq\   we have isomorphisms $$\bA_y^r/\bA_y^{r+1}\simeq \A_{k(y)}, \quad \bB_y^r/\bB_y^{r+1}\simeq {k(y)}.$$
 Since $\A_{k(y)}/k(y)$ is compact, the group 
$\bA_y^r/\bB_y^r$ is the  inverse limit of compact groups and hence is compact. 
For an  \lq archimedean fibre\rq\   $y=S_v$ over an archimedean place $v$  the quotient $\bA_y/\bB_y=\A_{\,k_v(S_v)}/k_v(S_v)$ is $k_v$-linearly compact in its one-dimensional adelic topology, see e.g. sect. 0 of \cite{ARR}.   Therefore we deduce

\begin{lem}
The  quotient $\bA(D)'/\bB(D)'$   is compact.
The quotient  $(\bB(D)'+\bC(D)')/\bB(D)'$ is a closed 
subgroup of compact $\bA(D)'/\bB(D)'$ and hence compact and so is $\bC(D)'/K(D)'\simeq (\bB(D)'+\bC(D)')/\bB(D)'$.
\end{lem}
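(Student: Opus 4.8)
The plan is to treat the three assertions in turn: the first reduces to the local quotients computed in 2-9, the second to a closed-subgroup-of-a-compact-group argument resting on 2-8, and the third to the second isomorphism theorem together with a topological refinement. For the first assertion I would make the restricted-product structure explicit. Writing $D=\sum n_y[y]$ with $n_y=0$ for all but finitely many $y$, and recalling that $\bA'$ has vanishing components along the \lq archimedean fibres\rq, the group $\bA(D)'$ is topologically the product $\prod_y\bA_y^{-n_y}$ taken over the curves $y$ that are not \lq archimedean fibres\rq, all but finitely many factors being the open subgroup $\bA_y^0$; correspondingly $\bB(D)'=\prod_y\bB_y^{-n_y}$ with $\bB_y^{-n_y}=K_y\cap\bA_y^{-n_y}$ closed in $\bA_y^{-n_y}$. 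Since the topology on $\bA(D)'$ is the product topology and the product of the open quotient maps $\bA_y^{-n_y}\To\bA_y^{-n_y}/\bB_y^{-n_y}$ is again open and surjective, one gets a topological isomorphism
$$\bA(D)'/\bB(D)'\ \simeq\ \prod_y\bA_y^{-n_y}/\bB_y^{-n_y}.$$
By the computation in 2-9 each factor $\bA_y^{r}/\bB_y^{r}$ with $r=-n_y$ is an inverse limit of compact groups built from $\A_{k(y)}/k(y)$, hence compact; so $\bA(D)'/\bB(D)'$ is compact by Tychonoff's theorem.

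For the second assertion I would invoke the fact established at the end of 2-8 that the results of 2-4 give $\bB(D)'+\bC(D)'=(\bB(D^\circ)\cap\bC(D^\circ))^\perp$, so that $\bB(D)'+\bC(D)'$ is closed in $\bA$; it contains $\bB(D)'$ and lies in $\bA(D)'$. Hence the preimage of $(\bB(D)'+\bC(D)')/\bB(D)'$ under the continuous quotient map $\bA(D)'\To\bA(D)'/\bB(D)'$ is exactly the closed set $\bB(D)'+\bC(D)'$, so its image is a closed subgroup of the compact group $\bA(D)'/\bB(D)'$ and is therefore compact.

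For the third assertion, the second isomorphism theorem supplies a continuous surjection $\bC(D)'\To(\bB(D)'+\bC(D)')/\bB(D)'$ whose kernel is $\bC(D)'\cap\bB(D)'$; since $\bB\cap\bC=K$ (the proposition of 2-6) this kernel equals $K(D)'=K\cap\bA(D)'$, and one obtains a continuous algebraic isomorphism $\bC(D)'/K(D)'\To(\bB(D)'+\bC(D)')/\bB(D)'$. I expect the main obstacle to be upgrading this continuous bijection to a \emph{topological} isomorphism: a continuous isomorphism of topological groups need not be open, and neither group is locally compact, so the equivalent statement that $\bC(D)'/K(D)'\To\bA(D)'/\bB(D)'$ is a topological embedding onto its image requires genuine input (the winding-line phenomenon shows that in general the image of a subgroup under a quotient map need not carry the quotient topology of the source). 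The point that rescues the argument is once more the closedness of $\bB(D)'+\bC(D)'$ from 2-8, which places us in the reflexive, self-dual setting of 2-5, built from countable limits of (linearly) compact groups: there a continuous homomorphism is open onto its image as soon as that image is closed, so the continuous bijection $\bC(D)'/K(D)'\To(\bB(D)'+\bC(D)')/\bB(D)'$ onto the compact (hence closed) subgroup of $\bA(D)'/\bB(D)'$ is a homeomorphism. Compactness of $(\bB(D)'+\bC(D)')/\bB(D)'$ then transports to $\bC(D)'/K(D)'$, completing the proof.
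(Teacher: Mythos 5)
Your proof follows the paper's own route almost step for step: the first assertion is the paper's deduction that each local quotient $\bA_y^{-n_y}/\bB_y^{-n_y}$ is an inverse limit of compact groups built from $\A_{k(y)}/k(y)$ (the preamble of 2-9), combined with the observation that $\bA(D)'/\bB(D)'$ is topologically the product of these over all curves; the second assertion rests, exactly as in the paper, on the closedness of $\bB(D)'+\bC(D)'=(\bB(D^\circ)\cap\bC(D^\circ))^\perp$ from the end of 2-8; and the third is the second isomorphism theorem with kernel $\bB(D)'\cap\bC(D)'=K(D)'$ via Proposition 2-6. Your ordering also avoids the potential circularity correctly: part one uses only the local structure and Tychonoff, so it can feed into Proposition 2-5, whose consequences in 2-8 you then use for parts two and three. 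Up to this point the proposal is correct and essentially identical to the paper's argument.

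The one place you go beyond the paper -- the upgrade of the continuous bijection $\bC(D)'/K(D)'\To(\bB(D)'+\bC(D)')/\bB(D)'$ to a homeomorphism -- is also the one step that is not actually secured. You are right that this needs an argument: compactness does not pull back along a continuous bijection, the source is not known to be locally compact, and the paper's $\simeq$ (which by the convention of 1-1 means topological isomorphism) is asserted there without proof. However, the principle you invoke to close the gap, namely that in the setting of 2-5 a continuous homomorphism is open onto its image as soon as that image is closed, is not established anywhere: the theorems of Kaplan cited in the proof of Proposition 2-5 provide separation of closed subgroups by characters and extension of characters from closed subgroups, not an open mapping theorem, and no such open mapping statement is proved in the paper for countable limits of locally compact groups. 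To make this step rigorous you would need either to prove such a principle for the relevant category, or to verify openness directly from the structure of the groups, e.g.\ by showing that for every open subgroup $U$ of $\bC(D)'$ there is an open subgroup $V$ of $\bA(D)'$ with $V\cap(\bB(D)'+\bC(D)')\subseteq (U\cap\bC(D)')+\bB(D)'$, using the product-over-curves description from part one. So: parts one and two are complete; part three is correct in outline and flags a real subtlety that the paper itself passes over in silence, but your resolution of it is an appeal to an unproven statement rather than a proof.
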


Hence $\bA'/\bB'$ is the  direct limit of compact $(\bA(D)'+\bB')/\bB'$.

\begin{crl}
$\bC(D)'$ is  compact.
\end{crl}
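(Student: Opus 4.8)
The plan is to realise $\bC(D)'$ as the middle term of a short exact sequence of topological groups
$$0 \To K(D)' \To \bC(D)' \To \bC(D)'/K(D)' \To 0$$
and then invoke the standard fact that a Hausdorff topological group which is an extension of a compact group by a compact closed subgroup is itself compact. The quotient term is already controlled by the Lemma just proved: since $\bC(D)'\cap\bB(D)' = \bC\cap\bB\cap\bA(D)\cap\bA' = K(D)'$, the second isomorphism theorem identifies $\bC(D)'/K(D)'$ with $(\bB(D)'+\bC(D)')/\bB(D)'$, which the Lemma asserts is compact. So the whole problem reduces to the kernel $K(D)'$ together with checking that the sequence is topologically exact.

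First I would verify the topological exactness. Because $K$ is discrete and closed in $\bA$, the intersection $K(D)' = K\cap\bA(D)\cap\bA'$ is a closed subgroup of $\bC(D)'$; hence the quotient $\bC(D)'/K(D)'$ carries a Hausdorff quotient topology and the isomorphism supplied by the Lemma is one of topological groups (per the convention of 1-1). It then remains to show $K(D)'$ is compact, which I would do in the two cases. In the arithmetic case one has $K' = \bB'\cap\bC' = 0$, as recorded in 2-8, so that $K(D)' = K'\cap\bA(D) = 0$; here $\bC(D)'$ is literally isomorphic to the compact group $\bC(D)'/K(D)'$ and nothing further is needed. In the geometric case $\bA' = \bA$, so $K(D)' = K\cap\bA(D)$ is the space of global sections attached to $D$, which is a finite-dimensional $\F_q$-vector space by the finiteness of adelic cohomology established in \cite{ARR}; being finite and discrete it is compact.

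With $K(D)'$ compact and closed and $\bC(D)'/K(D)'$ compact, the conclusion follows from the extension property for compactness: the quotient map $\bC(D)' \To \bC(D)'/K(D)'$ is a continuous surjection onto a compact space whose fibres are cosets homeomorphic to the compact $K(D)'$, hence it is proper, forcing $\bC(D)'$ to be compact. The only input that is not purely formal is the finiteness of $K\cap\bA(D)$ in the geometric case, which I expect to be the single genuine obstacle and which I would simply cite from \cite{ARR}; the arithmetic case collapses to the Lemma once $K'=0$ is used, and the gluing of kernel and quotient is the standard behaviour of compactness under group extensions.
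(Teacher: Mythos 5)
Your proposal is correct and follows essentially the same route as the paper: both reduce to the Lemma's statement that $\bC(D)'/K(D)'\simeq (\bB(D)'+\bC(D)')/\bB(D)'$ is compact, then dispose of the kernel by noting $K(D)'=0$ in the arithmetic case (since $K'=\bB'\cap\bC'=0$) and $K(D)=K\cap\bA(D)$ is finite in the geometric case by \cite{ARR}. The paper compresses this into two lines, while you additionally spell out the topological exactness of the extension and the properness argument, but no new idea is involved.
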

 \begin{proof}
In the arithmetic case $\bC(D)' = \bC(D)'/K(D)'$ is compact and $K'=0$. 
In the geometric case $K(D)$ is finite, see \cite{ARR}. 
 \end{proof}

\begin{thm}
 $K$ is discrete in $\bA$. The group of its characters, isomorphic to  $\bA/(\bB+\bC)$, is  compact.
 \end{thm}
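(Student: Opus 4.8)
The plan is to prove the two assertions together, reducing discreteness of $K$ to compactness of $\bA/(\bB+\bC)$ via the duality already established, and then proving that compactness by a stabilisation argument over divisors. Since $K=\bB\cap\bC$ is an intersection of closed subgroups it is itself closed, so the Proposition of 2-5 applies and gives a topological isomorphism $K\simeq X(\bA/K^\perp)=X(\bA/(\bB+\bC))$, where $K^\perp=\bB+\bC$ is the Corollary of 2-6. In the extended Pontryagin duality of \cite{Ka} a compact group has discrete character group; hence, once I know that $\bA/(\bB+\bC)$ is compact (it is Hausdorff because $\bB+\bC$ is closed), it follows at once that $K$ is discrete, while the identification of $X(K)$ with $\bA/(\bB+\bC)$ is exactly the Corollary of 2-6. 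Thus everything reduces to the compactness of $\bA/(\bB+\bC)$.

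To prove that, I would first exhibit a single divisor $D_0$ with $\bA=\bA(D_0)+\bB+\bC$. Taking complements and using $(B+C)^\perp=B^\perp\cap C^\perp$ from the Proposition of 2-5, together with $\bA(D)^\perp=\bA(D^\circ)'$ from 2-8, the strong additive reciprocity $\bB^\perp=\bB$, $\bC^\perp=\bC$ of 2-5, and $\bB\cap\bC=K$ of 2-6, yields
$$
(\bA(D_0)+\bB+\bC)^\perp=\bA(D_0^\circ)'\cap\bB\cap\bC=\bA(D_0^\circ)'\cap K=K(D_0^\circ)'.
$$
For $D_0$ large the divisor $D_0^\circ=\gK-D_0$ is very negative, so $K(D_0^\circ)'=0$: in the arithmetic case $K'=0$ already by 2-8, and in the geometric case $K(D_0^\circ)=H^0(S,\cO_S(D_0^\circ))=0$ by properness. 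As $\bA(D_0)$, $\bB$, $\bC$ are closed, their sum is closed by the Proposition of 2-5, hence equals its double complement $(K(D_0^\circ)')^\perp=0^\perp=\bA$.

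With such a $D_0$ the inclusion $\bA(D_0)\hookrightarrow\bA$ induces a continuous surjection of $\bA(D_0)/(\bB(D_0)+\bC(D_0))$ onto $\bA/(\bB+\bC)$, so it suffices to show the source is compact; and since $\bC(D_0)'\subseteq\bC(D_0)$, it is enough to treat the larger quotient $\bA(D_0)/(\bB(D_0)+\bC(D_0)')$. In the geometric case $\bA=\bA'$ and this is a quotient of the compact group $\bA(D_0)'/\bB(D_0)'$ by the closed subgroup $(\bB(D_0)'+\bC(D_0)')/\bB(D_0)'$, hence compact by Lemma 2-9. In the arithmetic case I would use $\bA(D_0)=\bA(D_0)'\times\bA^{af}$ and $\bB(D_0)=\bB(D_0)'\times\bB^{af}$, with $\bC(D_0)'$ supported off the archimedean fibres, to split
$$
\bA(D_0)/(\bB(D_0)+\bC(D_0)')\simeq\bigl(\bA(D_0)'/(\bB(D_0)'+\bC(D_0)')\bigr)\times\prod_{v\mid\infty}\A_{k_v(S_v)}/k_v(S_v);
$$
the first factor is compact as in the geometric case by Lemma 2-9, each archimedean-fibre factor is $k_v$-linearly compact by the one-dimensional theory recalled in 2-9, and a finite product of compacts is compact. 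Therefore $\bA/(\bB+\bC)$ is a continuous image of a compact group and so is compact, which completes the proof.

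I expect the stabilisation step to be the main obstacle: the point is precisely the passage from the finite-level compactness supplied by Lemma 2-9 to compactness of the full direct limit, which works only because the complement computation forces the limit to stabilise at a single $D_0$, and this in turn rests on the vanishing of global sections and on $\bB\cap\bC=K$. A secondary delicate point is the arithmetic bookkeeping: because $\bC$ couples horizontal curves with algebraic closed points of the archimedean fibres, $\bC$ does not respect the decomposition $\bA=\bA'\times\bA^{af}$, and it is important that one needs only the smaller, uncoupled $\bC(D_0)'$ to obtain the required upper bound, the coupling being harmless since compactness survives further quotients.
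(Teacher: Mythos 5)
Your geometric-case argument is correct, and it takes a genuinely different route from the paper's: the paper proves discreteness of $K$ first (for the geometric case it simply cites \cite{ARR}) and then obtains compactness of $\bA/(\bB+\bC)\simeq X(K)$ by duality, whereas you prove compactness first — via the double-complement stabilisation $\bA=\bA(D_0)+\bB+\bC$ forced by $H^0(\gK-D_0)=0$, together with Lemma 2-9 — and deduce discreteness. In the geometric case that is a clean, self-contained alternative.

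The arithmetic case, however, contains a genuine gap. You assert that each archimedean-fibre factor $\A_{k_v(S_v)}/k_v(S_v)$ is compact because 2-9 says it is $k_v$-linearly compact; but linear compactness over $k_v$ is not topological compactness. The quotient $\A_{k_v(S_v)}/k_v(S_v)$ admits continuous surjective homomorphisms onto nonzero finite-dimensional $k_v$-vector spaces (project onto finitely many residue components and quotient by the finite-dimensional image of global sections), and such spaces are not compact since $k_v$ is $\R$ or $\C$; this is exactly why Lemma 2-9 asserts compactness only for the primed quotients $\bA(D)'/\bB(D)'$, which exclude the archimedean fibres, and records only $k_v$-linear compactness for $y=S_v$. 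Consequently the intermediate quotient you reduce to, $\bA(D_0)/(\bB(D_0)+\bC(D_0)')$, is genuinely non-compact in the arithmetic case: since $\bC(D_0)'$ vanishes on the archimedean fibres while $\bB(D_0)=\bB(D_0)'\times\bB^{af}$, this quotient retains the non-compact factor $\bA^{af}/\bB^{af}$, so no rearrangement of the remaining steps can save the reduction. Your closing remark — that the coupling inside $\bC$ between horizontal curves and closed points of archimedean fibres is harmless — is exactly backwards: that coupling is what absorbs $\bA^{af}$ into $\bB+\bC$ (it is the content of $\bA=\bA'+\bB+\bC$, obtained from strong approximation in 3-3), and replacing $\bC(D_0)$ by the uncoupled $\bC(D_0)'$ destroys precisely the compactness you need. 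To repair your route you would have to prove compactness of $\bA(D_0)/(\bB(D_0)+\bC(D_0))$ with the full coupled $\bC(D_0)$, which amounts to redoing a strong-approximation argument at the archimedean fibres; the paper avoids this entirely by proving discreteness of $K$ directly in the arithmetic case — $K\cap\bA(D)$ is the Zariski $H^0(D)$, a finitely generated $O_k$-module, hence a lattice in the finite-dimensional real space $(K\cap\bA(D))\otimes_{\Q}\R$ sitting inside the product of the $\bA_y$ over archimedean fibres — and then deducing compactness of $\bA/(\bB+\bC)$ from discreteness by the same duality you invoke, run in the opposite direction.
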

 \begin{proof}
 In the geometric case see \cite{ARR}.  The discreteness of  $K$ in $\bA$ follows from is by the discreteness of 
$K\cap \bA(D)$ in $\bA$ for every divisor $D$.  In the arithmetic case 
 $K\cap \bA(D)$ is a finitely generated $O_k$-module: indeed, $K\cap \bA(D)$  is isomorphic to the Zariski $H^0(D)$, \cite{B}, \cite{MM2}. 
The latter is a finitely generated $O_k$-module, see e.g. Th. 5.2 Ch. III of \cite{RH}.  Consider $K\cap \bA(D)$ as a subobject of  $K\otimes_{\Q}\R=\prod \bB_y\subset \prod \bA_y$ (with respect to the diagonal embedding), the product is taken over all  \lq archimedean fibres\rq. The group 
$K\cap \bA(D)$ is a discrete subset of  a finite dimensional $\R$-space $(K\cap \bA(D))\otimes_{\Q}\R$ with its topology induced from the topology of $\prod \bA_y$. Hence $K\cap \bA(D)$ is discrete in $\bA$.
 \end{proof}

 \medskip
 
\noindent {\bf 2-10}.  
For a divisor $D$, a
 two-dimensional analogue $\cA_S(D)$ of the one-dimensional adelic complex $\cA(D)$ is
 \smallskip
$$ K\oplus \bB(D)\oplus \bC(D)\To \bB\oplus \bC\oplus \bA(D)\To \bA, $$
$(a_0,a_1,a_2)\mapsto (a_0-a_1,a_2-a_0, a_1-a_2)$ and $(a_{01},a_{02}, a_{12})\mapsto a_{01}+a_{02}+a_{12}$.

\medskip

The complex $\cA_S(D)$ is quasi-isomorphic to the complex 
$$\bC(D)\To \bA(D)/\bB(D)\To \bA/(\bB+\bC),$$
the maps from $\cA_S(D)$ are  given by the 
  minus projection to the third term, the projection to the third term and the quotient, the quotient; for the geometric case see \cite{ARR}.

The cohomology objects are  

 $$\begin{aligned}
 H^0(\cA_S(D))&\simeq K\cap \bA(D),\\ 
H^1(\cA_S(D))&\simeq (\bB+\bC)(D)/(\bB(D)+\bC(D)),\\
H^2(\cA_S(D))&=\bA/(\bB+\bC+\bA(D)).\end{aligned}$$ 

\smallskip

The group $H^0(\cA_S(D))$ is discrete and the group $H^2(\cA_S(D))$ is  compact by 2-9. 
We have  isomorphisms $H^1(\cA_S(D))\simeq ((\bB+\bC)\cap (\bB+\bA(D)))/(\bB+\bC(D)) \simeq
((\bB+\bA(D))\cap 
 (\bC+\bA(D))/(K+\bA(D))$, 
see \cite{ARR} for the geometric case, the arithmetic case is similar.

It is shown in \cite{ARR} that the adelic cohomology spaces are of finite dimension over $\Bbb F_q$ in the geometric case, so they are finite. 
They are not finite in general in the arithmetic case, for example $H^0(\cA_S(0))=O_k$.  

\medskip

For a divisor $D$ one can also   consider the following  complex $\cA_S'(D)$: 
\smallskip 
$$ K\oplus \bB(D)'\oplus \bC(D)'\To \bB\oplus \bC\oplus \bA(D)'\To \bA.$$

\smallskip

Similarly to the above, 
the complex $\cA_S'(D)$ is quasi-isomorphic to the complex 
$$\bC'(D)\To \bA'(D)/\bB'(D)\To \bA/(\bB+\bC).$$

We have

$$\begin{aligned}  H^0(\cA_S'(D))&\simeq K\cap \bA'(D)\quad\text{ (which is zero in the arithmetic case)}, \\
H^1(\cA_S'(D))&\simeq ((\bB+\bC)\cap \bA'(D))/(\bB'(D)+\bC'(D)),\\
H^2(\cA_S'(D))&=\bA/(\bB+\bC+\bA'(D)).\end{aligned}
$$

\begin{prp} 
 The character group of 
$H^j(\cA_S(D))$ 
 is isomorphic to  $H^{2-j}(\cA_{S}'(D^\circ))$ for $j=0,1,2$.
 \end{prp}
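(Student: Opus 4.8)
The plan is to obtain all three isomorphisms directly from the self-duality machinery, reading each cohomology group as a subquotient of $\bA$ and computing orthogonal complements. The ingredients I would assemble first are the explicit complements: $\bB^\perp=\bB$ and $\bC^\perp=\bC$ (the strong reciprocity theorem in 2-5), $K=\bB\cap\bC$ with $K^\perp=\bB+\bC$ (the Proposition and its Corollary in 2-6), and $\bA(D)^\perp=\bA'(D^\circ)$ with $D^\circ=\gK-D$ (2-8); together with the perp-calculus $(B+C)^\perp=B^\perp\cap C^\perp$, $(B\cap C)^\perp=B^\perp+C^\perp$ and the fact that such sums stay closed, all from the self-duality proposition in 2-5. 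From the final clause of that proposition I take the subquotient duality $X(B/C)\simeq C^\perp/B^\perp$ for closed $B\supset C$; its special cases $C=0$ and $B=\bA$ give $X(B)\simeq\bA/B^\perp$ and $X(\bA/M)\simeq M^\perp$ for closed $B,M$.

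The cases $j=0$ and $j=2$ are then immediate and are mutually dual computations. For $j=0$, write $H^0(\cA_S(D))\simeq K\cap\bA(D)=\bB\cap\bC\cap\bA(D)$; its complement is $\bB^\perp+\bC^\perp+\bA(D)^\perp=\bB+\bC+\bA'(D^\circ)$, so $X(H^0(\cA_S(D)))\simeq\bA/(\bB+\bC+\bA'(D^\circ))=H^2(\cA_S'(D^\circ))$. For $j=2$, write $H^2(\cA_S(D))=\bA/(\bB+\bC+\bA(D))$; its character group is $(\bB+\bC+\bA(D))^\perp=\bB\cap\bC\cap\bA'(D^\circ)=K\cap\bA'(D^\circ)\simeq H^0(\cA_S'(D^\circ))$.

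For $j=1$ I would not dualize the presentation $H^1(\cA_S(D))\simeq(\bB+\bC)(D)/(\bB(D)+\bC(D))$ but instead the crossed presentation from 2-10,
$$H^1(\cA_S(D))\simeq\bigl((\bB+\bA(D))\cap(\bC+\bA(D))\bigr)/(K+\bA(D)).$$
Setting $B=(\bB+\bA(D))\cap(\bC+\bA(D))$ and $C=K+\bA(D)$, the perp-calculus gives $B^\perp=(\bB+\bA(D))^\perp+(\bC+\bA(D))^\perp=\bB'(D^\circ)+\bC'(D^\circ)$, using $(\bB+\bA(D))^\perp=\bB\cap\bA'(D^\circ)=\bB'(D^\circ)$ and likewise for $\bC$, while $C^\perp=K^\perp\cap\bA(D)^\perp=(\bB+\bC)\cap\bA'(D^\circ)$. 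Hence
$$X(H^1(\cA_S(D)))\simeq C^\perp/B^\perp=\bigl((\bB+\bC)\cap\bA'(D^\circ)\bigr)/(\bB'(D^\circ)+\bC'(D^\circ)),$$
which is exactly the primary presentation of $H^1(\cA_S'(D^\circ))$ from 2-10. (Equivalently one may dualize the primary presentation of $\cA_S(D)$ and land on the crossed presentation of $\cA_S'(D^\circ)$; the two routes are symmetric.)

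I expect the one genuinely substantive point to be the choice of presentation in the middle degree, so that the complements of numerator and denominator land canonically on the dual complex rather than on a merely abstractly isomorphic group. The remaining work is bookkeeping: checking that every subgroup in sight is closed, so that $(B^\perp)^\perp=B$ and the subquotient duality apply. Closedness of the sums $\bB+\bA(D)$, $\bB+\bC$, etc.\ is the \emph{sums of closed subgroups are closed} clause of 2-5; closedness of the denominators reduces to the model fact, established in 2-8, that $\bB(D)'+\bC(D)'=(\bB(D^\circ)\cap\bC(D^\circ))^\perp$ is itself a complement, together with the compactness statements of 2-9. Finally, that all these character-group identities are topological isomorphisms, and not merely abstract ones, is guaranteed because the subquotients are of the countable-direct/inverse-limit-of-locally-compact type to which Kato's duality \cite{Ka}, underlying the proposition of 2-5, applies.
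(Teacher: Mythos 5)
Your proposal is correct and takes essentially the same approach as the paper: dualize each cohomology group as a subquotient of $\bA$ via the perp-calculus of 2-5, the complement $\bA(D)^\perp=\bA(D^\circ)'$ of 2-8, and, for the middle degree, the crossed presentation of $H^1$ from 2-10 (the paper runs this last computation in the opposite direction, from the character group of $H^1(\cA_S'(D))$ to the crossed presentation of $H^1(\cA_S(D^\circ))$, which is the symmetric route you yourself point out). One small remark: the final clause of the proposition in 2-5 is literally stated as $X(B/C)\simeq X(C^\perp/B^\perp)$, but the form you invoke, $X(B/C)\simeq C^\perp/B^\perp$, is exactly the one the paper itself applies, so your reading is the intended one.
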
 
 
 This follows from using the   continuous and non-degenerate pairing $\langle\,\,,\,\,\rangle$, Theorem in 2-4 and the description of the complement of $\bA(D)$ in 2-8.  
  See \cite{ARR} in the geometric case. In the arithmetic case 
 use arguments similar to \cite{ARR}. For example, the group of characters of $H^1(\cA_S'(D))$
  is isomorphic to $((\bB+\bA(D^\circ))\cap 
 (\bC+\bA(D^\circ))/(K+\bA(D^\circ))$
 isomorphic to  $H^1(\cA_S(D^\circ))$.

 \smallskip 
We deduce 
 
 \begin{crl} 
$H^2(\cA_S(D))=\bA/(\bB+\bC+\bA(D))$ is  finite in the geometric case and zero in the arithmetic case. 
 \end{crl}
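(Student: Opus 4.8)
The plan is to deduce the Corollary directly from the preceding Proposition 2.28 together with the duality/finiteness facts already assembled. The target statement concerns $H^2(\cA_S(D))=\bA/(\bB+\bC+\bA(D))$, and the natural strategy is to express its character group via Proposition 2.28 as a lower cohomology of the dual complex, where the relevant group is easy to identify.

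First I would apply Proposition 2.28 with $j=2$, which gives that the character group $X(H^2(\cA_S(D)))$ is isomorphic to $H^0(\cA_S'(D^\circ))$. By the formula for $H^0$ of the primed complex displayed just above the Proposition, $H^0(\cA_S'(D^\circ))\simeq K\cap \bA'(D^\circ)$. At this point the two cases split. In the arithmetic case the parenthetical remark in the $H^0(\cA_S'(D))$ formula tells us that $K\cap\bA'(D^\circ)=0$; concretely this is because $K'=\bB'\cap\bC'=0$ as recorded in 2-8, so $K\cap\bA'(D^\circ)\subseteq K'=0$. Hence its character group is trivial, and since $H^2(\cA_S(D))$ is topologically isomorphic to the character group of its own character group (it is compact by 2-10, so Pontryagin-type reflexivity from Proposition 2.25 applies), we conclude $H^2(\cA_S(D))=0$ in the arithmetic case.

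In the geometric case the same chain gives $X(H^2(\cA_S(D)))\simeq K\cap\bA'(D^\circ)=K\cap\bA(D^\circ)=H^0(\cA_S(D^\circ))$, where $\bA'=\bA$ since there are no archimedean fibres. By the finite-dimensionality statement quoted in 2-10 (proved in \cite{ARR}), the space $H^0(\cA_S(D^\circ))=K\cap\bA(D^\circ)$ is finite-dimensional over $\F_q$, hence finite. A finite abelian group has character group of the same finite cardinality, so $H^2(\cA_S(D))$ is finite. Thus both assertions of the Corollary follow by specialising Proposition 2.28 at $j=2$ and reading off the resulting $H^0$.

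The only genuine point requiring care, and hence the main obstacle, is the passage from finiteness (or vanishing) of the character group back to finiteness (or vanishing) of $H^2(\cA_S(D))$ itself. This requires knowing that $H^2(\cA_S(D))$ is reflexive, i.e. canonically isomorphic to the character group of its character group; this is supplied by the compactness of $H^2(\cA_S(D))$ established in 2-10 together with the duality machinery of Proposition 2.25 (or equivalently the reflexivity of the self-dual group $\bA$ and its closed subquotients). In the arithmetic case vanishing of the character group forces the group itself to vanish; in the geometric case the finite cardinality of the character group equals that of the group. I would make this reflexivity step explicit, since it is what legitimately transfers the information from $H^0(\cA_S'(D^\circ))$ back to $H^2(\cA_S(D))$.
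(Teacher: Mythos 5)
Your proposal is correct and follows essentially the same route as the paper: the Corollary is deduced from the preceding Proposition with $j=2$, identifying the character group of $H^2(\cA_S(D))$ with $H^0(\cA_S'(D^\circ))\simeq K\cap\bA'(D^\circ)$, which is finite in the geometric case and zero in the arithmetic case (since $K'=0$), and then transferring back via compactness of $H^2(\cA_S(D))$ and the duality machinery of 2-5. Your explicit treatment of the reflexivity step is a reasonable elaboration of what the paper leaves implicit in its one-line deduction.
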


Lemma 2-9 implies that $H^1(\cA_S'(D))$ is  compact.  
So in the geometric case, $H^1(\cA_S(D))$ is simultaneously  compact and discrete by the previous proposition, hence finite.

 \smallskip
 
\noindent {\bf 2-11}.  
Recall that  $\bA$, $\bA'$ and $\bA_y$ are not  locally compact  groups. Otherwise  two-dimensional local fields associated to $S$ would have been  locally compact but they are not. 
For example, every open subgroup (and hence closed) of the additive group of a two-dimensional local field $F$  is not  compact. Indeed, in the equal characteristic case, such a subgroup contains a subgroup of the form $V=\sum_{-\infty}^{+\infty} U_i t_2^i\cap F$ with $U_i$ being open subgroups of the one-dimensional residue field and $U_i\subset U_{i+1}$ for all $i$, however for $V$ to be compact, $U_i$ should be 0 for almost all negative $i$ since otherwise $V$ has a cover  by open $V_j=\sum V_{i,j} t_2^i\cap F$, where $V_{i,j}=U_i$ for all $i\ge -j$ and $V_{i,j}$ is a properly smaller open subgroup of $U_i$ for all $i<-j$, 
which does not have a finite subcover. One argues similarly in the mixed characteristic case. 

The space $F$ is  not a Baire space: it can be represented as the union of  a countable collection of compact subsets with empty interior  (of type $\sum_{i\geq i_0} K_i t_2^i$ with compact $K_i$).

  \smallskip

The quotient  $\bA/\bB$ 
 is not  locally compact. Indeed, otherwise  its character group isomorphic to $\bB$ would be  locally compact and then $\bA$ would be  locally compact, a contradiction. Similarly, $\bA/\bC$ is not  locally compact. Since $\bC/K$ is homeomorphic  to $\bA/\bB$ and $\bB/K$ is homeomorphic to $\bA/\bC$, they are not  locally compact and neither are $\bB$ and $\bC$. 
  
 Since $\bA_y$ is not  locally compact, the similar argument shows that $\bA_y/\bB_y$ is not  locally compact,  at the same time  the quotient $\bA_y/\bB_y$  is the  direct limit of compact spaces $\bA_y^r/\bB_y^r$.

\bigskip

\noindent {\bf 3. Selective integration on two-dimensional geometric adeles} 

\bigskip

\noindent {\bf 3-1}.  
 We will define  finitely additive translation invariant measures on certain  subquotients of $\bA$ important for applications.  They are either direct limits of  compact subquotients or inverse limits of discrete subquotients.
 Normalisation aspects will be important, but still allowing a degree of flexibility. 
 
 One can develop 
a more general theory of translation invariant measures on subquotients of $\bA$, but since this general theory is not needed in this paper, we do not include it.

Some of the finitely additive translation invariant measures defined below are tensor product of the relevant measures on all curves $y$. However, note that none of them is the tensor product of measures on (two-dimensional)  local fields associated to $x\in y$, unlike the one-dimensional measure on $\Bbb A$ in 1-6 and the measure on two-dimensional analytic adeles in \cite{AoA2}. 

\smallskip

First, we will define the normalised  translation invariant finitely additive measures $\mu_{\bA'/\bB'}$,  $\mu_{\,\bB'}$, $\mu_{\bA'}$ on
objects $\bA'/\bB', \bB', \bA'$.  

\smallskip

\begin{Definition}
Recall that $\bA'/\bB'$ is the  direct limit of compact subquotients $(\bA(D)'+\bB')/\bB'$, with $D$ running through all divisors. This subquotient is isomorphic to 
$(\bA(D)'+\bB)/\bB$ and to $\bA(D)'/\bB(D)'$,  
Define the set of measurable subsets of $\bA'/\bB'$ as the ring of sets generated by measurable subsets of  compact $(\bA(D)'+\bB')/\bB'$ for all $D$ (so each such measurable subset  is a measurable subset of  some compact $(\bA(D)'+\bB')/\bB'$). 
In order to define a finitely additive translation invariant measure on $\bA'/\bB'$, we need to fix the normalisations of the Haar volumes $c_D$  of compact $(\bA(D)'+\bB')/\bB'$ so that they respect the direct limit with respect to $D$. 
For $D=0$ choose a non-trivial Haar measure on compact $(\bA(0)'+\bB')/\bB'$ so that its volume is $$c_0=c_0( \bA',\bB')$$ is 1. 
If $E$ is the divisor of a curve $y$ then the quotient $(\bA(D+E)'+\bB')/(\bA(D)'+\bB')$ is canonically isomorphic to the quotient $\bA(D+E)'/(\bA(D)'+\bB(D+E)' )$ non-canonically  isomorphic to compact 
$\A_{k(y)}/k(y)$ with its normalised volume $m_E$ equal to 1,  as in 1-6. 
 Using this as a step to pass from one divisor to another, define  the volume $c_D$ of compact $(\bA(D)'+\bB')/\bB'$  so that the relations $c_{D+E}=c_{D} m_E$ hold for all divisors $D$.
 Hence  $$c_D=c_0\prod m_{D_i}^{r_i},  \quad \text{\rm where $D=\sum r_i D_i$}$$
  and $D_i$ are divisors of curves.  
 Due to the choice of $c_0$ and $m_{D_i}$, we have $c_D=1$. 
Due to this formula, the measure of a measurable subset of $(\bA(D)'+\bB')/\bB'$ as a subset of $(\bA(D)'+\bB')/\bB'$ is equal to its measure as a subset of $(\bA(D+E)'+\bB')/\bB'$ for any effective divisor $E$. 
Now define the measure $\mu_{\bA'/\bB'}$ of a measurable subset of $\bA'/\bB'$ as its normalised measure in any compact $(\bA(D)'+\bB')/\bB'$  which it is a subset of. 
This a translation invariant finitely additive measure $\mu_{\bA'/\bB'}$.  
\end{Definition}

\begin{Definition}
The group of characters of compact $(\bA(D)'+\bB)/\bB$ is the discrete space $\bB/\bB(D^\circ)\simeq \bB'/\bB(D^\circ)'$, while  $\bB'$ is  the  inverse limit of the latter with respect to all divisors $D$.  Define the translation invariant finitely additive measure $\mu_{\,\bB'}$ on $\bB'$ by duality to the definitions in the previous paragraph. In other words, similarly to  1-10,  the measures on the compact subquotient group and its dual group with respect to the character $\psi$ from 2-4 should satisfy the property: 
the volume of the compact group  times the volume of any point of its dual group is 1.
So 
the normalisation of the volume of $(\bA(D)'+\bB')/\bB'$ corresponds to the atomic measure on  $\bB'/\bB(D^\circ)'$ in which every point has volume  $c_D^{-1}$, which is 1 in our choice.  
\end{Definition}

\smallskip

\begin{Definition}
Now define the translation invariant finitely additive measure $\mu_{\bA'}$ on $\bA'$ as the tensor product of the defined measures on $\bA'/\bB'$ and on $\bB'$. Similarly to the first paragraph of 1-10 this measure does not depend on the choice of positive $c_0$, and it is self-dual with respect to $\psi$. 

\smallskip

Similarly, we define the normalised translation invariant finitely additive measures $\mu_{\bA_y/\bB_y}$,
 $\mu_{\,\bB_y}$, $\mu_{\bA_y}$ 
for every curve $y$. The definitions imply that $\mu_{\bA'/\bB'}$, $\mu_{\,\bB'}$, $\mu_{\bA'}$ are their tensor products.
\end{Definition}

\smallskip

Next, we will define the  normalised translation invariant measures $\mu_{(\bB+\bC)'/\bB'}$, $\mu_{(\bB+\bC)'}$,  $\mu_{\,\bB'/\Delta K}$, $\mu_{\bA'/(\bB+\bC)'}$,  $\mu_K$, $\mu_{\bA/(\bB+\bC)}$ 
 on objects 
$(\bB+\bC)'/\bB'$, $(\bB+\bC)'$, $\bB'/\Delta K$, $\bA'/(\bB+\bC)'$,  $K$, $\bA/(\bB+\bC)$,
where $\Delta K$ is the isomorphic image of $K$ in $\bA'$.  
 The general procedure is the same as above. Below in this subsection for the quotient object $Q/R$  which is the direct limits of compact subquotients
 $(Q(D)+R)/R$ we normalise the normalised measure on $(Q(0)+R)/R$ by asking that  its volume $c_0(Q,R)$ is 1,
 and we will take the atomic measure on the group of characters of $Q/R$ in which gives every point has volume $c_0(Q,R)^{-1}$.

\begin{Definition}
The quotient $(\bB+\bC)'/\bB'$ of $\bA'/\bB'$ is the  direct limit of subobjects  
$ (\bB+\bC)(D)'/\bB(D)'\simeq  ((\bB+\bC)(D)'+\bB')/\bB'$
  of  the compact object  $(\bA(D)'+\bB')/\bB'$, with their induced measure.  
Similarly to the previous material define the translation invariant finitely additive measure on $(\bB+\bC)'/\bB'$ using the normalised measures on $((\bB+\bC)(D)'+\bB')/\bB'$. 
Define the normalised measure on $(\bB+\bC)'$ as the tensor product of this normalised measure on $(\bB+\bC)'/\bB'$ and the normalised measure on $\bB'$.

\smallskip
 
The group of characters of $(\bB+\bC)'/\bB'$ is  isomorphic to  $(\bB+\bA^\ast)/(K+\bA^\ast)$, 
where $\bA^\ast=0\times \bA^{af}$. The latter quotient object is isomorphic to 
$ ((\bB+\bA^\ast)/\bA^\ast)/(( K+\bA^\ast)/\bA^\ast)
 \simeq \bB'/\Delta K$. The space of characters of $((\bB+\bC)(D)'+\bB)/\bB$
is isomorphic to $\bB/(\bB\cap (K+\bA (D^\circ))$. This way  the quotient $\bB'/\Delta K$ is 
 the  inverse limit of  discrete spaces $\bB/(\bB\cap (K+\bA (D^\circ))$. Similarly to the previous, define the normalised measure  $\mu_{\,\bB'/\Delta K}$ on  $\bB'/\Delta K$ by duality.

\smallskip

The  quotient object $\bA'/(\bB+\bC)'$ of $\bA'/\bB'$ is the  direct limit of compact  $((\bB+\bC)'+\bA(D)')/(\bB+\bC)'\simeq \bA(D)'/(\bB+\bC)(D)'$, the latter are quotients of  $\bA(D)' /\bB(D)'$. 
 Normalise the measure of $\bA(D)'/(\bB+\bC)(D)'$ so that its tensor product with the already normalised measure of $(\bB+\bC)(D)'/\bB(D)'$ is the already normalised measure of $\bA(D)'/\bB(D)'$. 
Thus we get  the translation invariant finitely additive measure $\mu_{\bA'/(\bB+\bC)'}$ on $\bA'/(\bB+\bC)'$, similarly to the previous material.  

\smallskip

The space of characters of $\bA'/(\bB+\bC)'$ is isomorphic to $(K+\bA^\ast)/\bA^\ast 
\simeq K$. 
Similarly to the previous, define the translation invariant measure $\mu_{K}$  on discrete $K$ by duality, using the normalised measures on $((\bB+\bC)'+\bA(D)')/(\bB+\bC)'$. It is an atomic measure.

\smallskip

The  quotient $\bA/(\bB+\bC)$ is isomorphic to the group of characters of $K$, define $\mu_{\bA/(\bB+\bC)}$ on $\bA/(\bB+\bC)$ by duality to the atomic measure $\mu_K$ on discrete space $K$. 
\end{Definition}

\smallskip

  The definitions imply

\begin{lem} 
$$\begin{aligned}
\mu_{\,\bA'}&=\mu_{\,\bA'/\bB'}\otimes\mu_{\,\bB'}=
\mu_{\bA'/(\bB+\bC)'}\otimes\mu_{(\bB+\bC)'},\\
\mu_{(\bB+\bC)'}&= \mu_{(\bB+\bC)'/\bB'}\otimes \mu_{\,\bB'},\\
\mu_{\bA'/\bB'}&=\mu_{\bA'/(\bB+\bC)'}\otimes\mu_{(\bB+\bC)'/\bB'}, \\
\mu_{\,\bB'}&=\mu_{\,\bB'/\Delta K}\otimes\mu_{K}.
\end{aligned}
$$
\end{lem}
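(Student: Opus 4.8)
The plan is to notice that three of the four displayed identities are immediate from the definitions, and then to reduce the remaining statements to the compatibility of the finitely additive measures with the Fubini (Weil) decomposition along the chain of closed subgroups $\Delta K\subset\bB'\subset(\bB+\bC)'\subset\bA'$, together with the duality of 2-5. By construction $\mu_{\bA'}=\mu_{\bA'/\bB'}\otimes\mu_{\bB'}$ is the definition of $\mu_{\bA'}$, the identity $\mu_{(\bB+\bC)'}=\mu_{(\bB+\bC)'/\bB'}\otimes\mu_{\bB'}$ is the definition of $\mu_{(\bB+\bC)'}$, and the level-$D$ normalisation chosen for $\mu_{\bA'/(\bB+\bC)'}$ was precisely the one making the relation $\mu_{\bA(D)'/\bB(D)'}=\mu_{\bA(D)'/(\bB+\bC)(D)'}\otimes\mu_{(\bB+\bC)(D)'/\bB(D)'}$ hold. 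So only the third and fourth displayed identities, and the second equality of the first line, require an argument.

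First I would prove the third identity, $\mu_{\bA'/\bB'}=\mu_{\bA'/(\bB+\bC)'}\otimes\mu_{(\bB+\bC)'/\bB'}$, by passing the finite-level decomposition to the direct limit. For each divisor $D$ one has the exact sequence of compact groups
$$0\To (\bB+\bC)(D)'/\bB(D)'\To \bA(D)'/\bB(D)'\To \bA(D)'/(\bB+\bC)(D)'\To 0,$$
and the honest Haar (Weil) decomposition of $\mu_{\bA(D)'/\bB(D)'}$ along this sequence holds at this level by the choice of normalisation. The point to verify is that these decompositions are compatible with the transition maps as $D$ grows: a measurable set lying in level $D$ must have the same measure when viewed in level $D+E$ for effective $E$, in each of the three objects. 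This is guaranteed by the normalisation relations $c_{D+E}=c_D\, m_E$ with $c_D=1$ and $m_E=1$, which were imposed exactly so that the normalised measures respect the direct limits. Taking the direct limit over all $D$ then yields the identity on $\bA'/\bB'$.

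Given the third identity, the second equality of the first line follows by transitivity of the decomposition along the tower $\bB'\subset(\bB+\bC)'\subset\bA'$: starting from the definition $\mu_{\bA'}=\mu_{\bA'/\bB'}\otimes\mu_{\bB'}$, I would substitute $\mu_{\bA'/\bB'}=\mu_{\bA'/(\bB+\bC)'}\otimes\mu_{(\bB+\bC)'/\bB'}$ and then use the definition $\mu_{(\bB+\bC)'}=\mu_{(\bB+\bC)'/\bB'}\otimes\mu_{\bB'}$ to regroup the last two factors, obtaining $\mu_{\bA'}=\mu_{\bA'/(\bB+\bC)'}\otimes\mu_{(\bB+\bC)'}$. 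This is a purely formal manipulation of tensor products of the level-$D$ Haar measures and uses nothing beyond associativity.

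The last identity, $\mu_{\bB'}=\mu_{\bB'/\Delta K}\otimes\mu_K$, is the one requiring duality and is the main obstacle. The key input is that $\bA'$ is self-dual under $\langle\,\,,\,\,\rangle_\psi$ (2-5) and that $\bB^\perp=\bB$, $\bC^\perp=\bC$ (strong reciprocity, 2-5), $\bB\cap\bC=K$ and $K^\perp=\bB+\bC$ (2-6), whence $(\bB+\bC)^\perp=\bB^\perp\cap\bC^\perp=\bB\cap\bC=K$ by the duality Proposition of 2-5. Dualising the exact sequence $0\to(\bB+\bC)'/\bB'\to\bA'/\bB'\to\bA'/(\bB+\bC)'\to 0$ gives, on character groups, the sequence $0\to K\to\bB'\to\bB'/\Delta K\to 0$ already identified in the definitions. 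By definition $\mu_{\bB'}$ is dual to $\mu_{\bA'/\bB'}$, $\mu_K$ is dual to $\mu_{\bA'/(\bB+\bC)'}$, and $\mu_{\bB'/\Delta K}$ is dual to $\mu_{(\bB+\bC)'/\bB'}$. I would then invoke the standard fact that the dual of a Fubini-decomposed measure on a short exact sequence is the Fubini product of the dual measures on the dual sequence; applied to the third identity this yields $\mu_{\bB'}=\mu_K\otimes\mu_{\bB'/\Delta K}=\mu_{\bB'/\Delta K}\otimes\mu_K$. The genuine work is to justify this duality-of-products statement in the present finitely additive, direct/inverse-limit setting rather than merely citing the locally compact case; this reduces, level by level, to the ordinary Weil duality for the compact groups $\bA(D)'/\bB(D)'$ and their discrete duals, where the consistent normalisation $c_D=1$ makes the dual point masses equal to $1$ and guarantees compatibility across the limit.
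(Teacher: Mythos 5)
Your proposal is correct and takes essentially the same route as the paper: the paper's entire justification for this lemma is the sentence ``The definitions imply,'' and your unpacking --- the first two lines and the level-$D$ normalisation of $\mu_{\bA'/(\bB+\bC)'}$ being definitional, the second equality of the first line following by associativity, and the fourth identity obtained by dualising the third via $\bB^\perp=\bB$, $\bC^\perp=\bC$, $(\bB+\bC)^\perp=K$ and the fact that $\mu_{\bB'}$, $\mu_{\bB'/\Delta K}$, $\mu_K$ are each defined by duality --- is exactly the argument those definitions are designed to make immediate. One minor imprecision in your closing remark: the dual point masses are $1$ for the pair $(\bA(D)'/\bB(D)',\,\bB'/\bB(D^\circ)')$ but not in general for $\mu_K$ (its point mass $c_\star$ need not be $1$, cf.\ 4-2); fortunately the duality-of-Weil-decompositions step only needs each pair of measures to be mutually dual, not normalised to $1$.
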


\medskip

The space of functions which we will integrate  against the defined measures will be spanned by the characteristic functions of divisors and their products with characters.

\smallskip

\noindent {\bf 3-2}. 
The current understanding of the archimedean fibres part of the developing theory leads to the following

\begin{Definition}
For  an  \lq archimedean fibre\rq\  $y=S_v$ using  the surjective partially defined map $\iota_y$ from 2-7, define
$$
\mu_{\bA_y}:=\iota_y^* \mu_{\Bbb R},
$$
where $\mu_{\Bbb R}$ 
 is the Lebesque measure on real numbers. 

The space of integrable functions is $g\circ \iota_y$ where $g$ are integrable functions on $k_v$.
We have 
$$\int_{\bA_y} g(\iota_y \beta) \, \mu_{\bA_y}=
\int_{\Bbb R} 
g(x)\,\mu_{\Bbb R}. 
$$

Choose a fundamental domain in $\bA_y$ for $\bA_y/\bB_y=\Bbb A_{k_v(y)}/k_v(y)$ and define $\mu_{\bA_y/\bB_y}$
as the restriction of $\mu_{\bA_y}$ on it. 

Define  $\mu_{\bB_y}$ so that the property $\mu_{\bA_y}=\mu_{\bA_y/\bB_y}\otimes \mu_{\bB_y}$ holds. 
\end{Definition}

\begin{Definition}
Define Fourier transform $\cF_{\bA_y}$ in a similar way, using $\iota_y^*$, i.e.
$$\cF_{\bA_y}(g\circ \iota_y):=\cF_{k_v}(g)\circ \iota_y,$$
\end{Definition}

\begin{Definition}
Extend the previous definitions from one  \lq archimedean fibre\rq\  to all  \lq archimedean fibres\rq. This supplies the  measure and integration on $\bA^{af}$, $\bB^{af}$ and $\bA^{af}/\bB^{af}$, with the relevant properties,  including $\mu_{\bA^{af}}=\mu_{\bA^{af}/\bB^{af}}\otimes \mu_{\,\bB^{af}}$.
\end{Definition}

\medskip

\noindent {\bf 3-3}.  
The strong approximation theorem and the definition of $\bC$ imply
 $\bA=\bA'+\bB+\bC$. 
 Hence the embedding $\bA'\to \bA$ induces the isomorphism $\bA'/(\bB+\bC)'\simeq \bA/(\bB+\bC)$.
 Hence $\bA'/(\bB+\bC)'$ is compact and its group of characters $\Delta K$ is discrete in $\bA'$.

 We also deduce that $(\bB+\bC)/(\bB+\bC)'\simeq \bA/\bA'\simeq \bA^{af}$. 
 The quotient of $\bA/\bB$ by $(\bA'+\bB)/\bB\simeq \bA'/\bB'$ is isomorphic to $\bA/(\bB+\bA')\simeq \bA^{af}/\bB^{af}$, 
 and the quotient of $(\bB+\bC)/\bB$ by $((\bB+\bC)'+\bB)/\bB\simeq (\bB+\bC)'/\bB'$ is isomorphic to $(\bB+\bC+\bA')/(\bB+\bA')= \bA/(\bB+\bA')\simeq \bA^{af}/\bB^{af}$.

 \begin{Definition}
 Define, using 3-1 and the preceding material of  this section, the normalised translation invariant measures 
 $$\begin{aligned}
\mu_{\,\bA}:&=\mu_{\,\bA'}\otimes\mu_{\,\bA^{af}},\\
\mu_{\,\bB}:&=\mu_{\,\bB'}\otimes\mu_{\,\bB^{af}},\\
\mu_{\,\bA/\bB}:&=\mu_{\,\bA'/\bB'}\otimes\mu_{\,\bA^{af}/\bB^{af}},\\
\mu_{\,\bB+\bC}:&= \mu_{(\bB+\bC)'}\otimes \mu_{\,\bA^{af}},\\
\mu_{(\bB+\bC)/\bB}:&=\mu_{(\bB+\bC)'/\bB'}\otimes \mu_{\,\bA^{af}/\bB^{af}}.
\end{aligned}
$$
\smallskip

Using  $\bC(D)/K(D)\simeq (\bB(D)+\bC(D))/\bB(D)\subset \bA(D)/\bB(D)$ and $\mu_{\bA/\bB}$, 
define the normalised translation invariant measures 
$\mu_{\,\bC/K}$. Then define $\mu_{\,\bC}:=\mu_{\bC/K}\otimes \mu_K$. 
\smallskip

Define the normalised translation invariant measure $\mu_{\,\bB/K}$ so that the formula $\mu_{\,\bB}=\mu_{\, \bB/K}\otimes \mu_K$ holds.
\smallskip

Define the normalised translation invariant measure  $\mu_{(\bB+\bC)/\bC}$ so that the formula $\mu_{\,\bB+\bC}=\mu_{(\bB+\bC)/\bC}\otimes \mu_{\,\bC}$ holds.

\smallskip

Define the normalised translation invariant measure $\mu_{(\bB+\bC)/K}$ so that the formula $\mu_{\,\bB+\bC}=\mu_{(\bB+\bC)/K}\otimes \mu_{\,K}$ holds.
\end{Definition}

\smallskip

 Using 3-1 we deduce

\smallskip

\begin{lem} 
$$\mu_{\bA}=\mu_{\bA/\bB}\otimes\mu_{\,\bB}, \quad  \mu_{(\bB+\bC)/K}=\mu_{(\bB+\bC)/\bB}\otimes \mu_{\,\bB/K}=\mu_{(\bB+\bC)/\bC}\otimes \mu_{\,\bC/K}. $$
\end{lem}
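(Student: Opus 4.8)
The plan is to exploit the fact that every measure named in the statement is, by the constructions of 3-1, 3-2 and 3-3, a tensor product of the basic measures $\mu_{\bA'/\bB'}$, $\mu_{\bB'}$, $\mu_{(\bB+\bC)'/\bB'}$ of 3-1, the archimedean-fibre measures $\mu_{\bA^{af}/\bB^{af}}$, $\mu_{\bB^{af}}$ of 3-2, and the atomic measure $\mu_K$. Each of the three identities then reduces to a rearrangement of tensor factors together with the transitivity of quotient measures along a tower of closed subgroups. Since all of these measures are defined by normalising genuine Haar measures on the compact or discrete subquotients indexed by a divisor $D$ and then passing to the direct or inverse limit, I would first check each identity at the finite level of a fixed $D$, where every group in sight is locally compact, abelian hence unimodular, and the classical quotient-integration (Weil) formula and its transitivity hold verbatim for honest Haar measures. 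The normalisations $c_0=1$ and $m_E=1$ fixed in 3-1 make these finite-level identities compatible with the transition maps in $D$, so they pass to the limit. This separates the argument into a purely formal tensor-algebra part and a standard finite-level Haar-theoretic part.

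For $\mu_{\bA}=\mu_{\bA/\bB}\otimes\mu_{\bB}$ I would use the product decomposition $\bA=\bA^{st}\times\bA^{af}$ of 2-8 and the induced splittings $\bB=\bB^{st}\times\bB^{af}$ and $\bA/\bB\simeq(\bA'/\bB')\times(\bA^{af}/\bB^{af})$, the latter from the short exact sequence recorded in 3-3. Expanding the definitions $\mu_{\bA}=\mu_{\bA'}\otimes\mu_{\bA^{af}}$, $\mu_{\bA/\bB}=\mu_{\bA'/\bB'}\otimes\mu_{\bA^{af}/\bB^{af}}$, $\mu_{\bB}=\mu_{\bB'}\otimes\mu_{\bB^{af}}$, then substituting $\mu_{\bA'}=\mu_{\bA'/\bB'}\otimes\mu_{\bB'}$ from the Lemma of 3-1 and $\mu_{\bA^{af}}=\mu_{\bA^{af}/\bB^{af}}\otimes\mu_{\bB^{af}}$ from 3-2, the four factors regroup as $(\mu_{\bA'/\bB'}\otimes\mu_{\bA^{af}/\bB^{af}})\otimes(\mu_{\bB'}\otimes\mu_{\bB^{af}})$, which is exactly $\mu_{\bA/\bB}\otimes\mu_{\bB}$. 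The only non-formal input is that the primed-versus-archimedean product direction commutes with the $\bA$-versus-$\bB$ tower direction, which is Fubini at each finite level.

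The second identity I would reach in two moves. First, the same rearrangement, now using $\mu_{(\bB+\bC)'}=\mu_{(\bB+\bC)'/\bB'}\otimes\mu_{\bB'}$ from 3-1 and the splitting $(\bB+\bC)/\bB\simeq((\bB+\bC)'/\bB')\times(\bA^{af}/\bB^{af})$ from 3-3, gives $\mu_{\bB+\bC}=\mu_{(\bB+\bC)/\bB}\otimes\mu_{\bB}$. Combining with $\mu_{\bB}=\mu_{\bB/K}\otimes\mu_K$ yields $\mu_{\bB+\bC}=\mu_{(\bB+\bC)/\bB}\otimes\mu_{\bB/K}\otimes\mu_K$; comparing with the defining relation $\mu_{\bB+\bC}=\mu_{(\bB+\bC)/K}\otimes\mu_K$ and cancelling $\mu_K$ gives $\mu_{(\bB+\bC)/K}=\mu_{(\bB+\bC)/\bB}\otimes\mu_{\bB/K}$. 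The third identity is then purely the transitivity of disintegration along $(\bB+\bC)\supset\bC\supset K$: the defining relations $\mu_{\bB+\bC}=\mu_{(\bB+\bC)/\bC}\otimes\mu_{\bC}$ and $\mu_{\bC}=\mu_{\bC/K}\otimes\mu_K$ give $\mu_{\bB+\bC}=\mu_{(\bB+\bC)/\bC}\otimes\mu_{\bC/K}\otimes\mu_K$, and cancelling $\mu_K$ against $\mu_{\bB+\bC}=\mu_{(\bB+\bC)/K}\otimes\mu_K$ produces $\mu_{(\bB+\bC)/K}=\mu_{(\bB+\bC)/\bC}\otimes\mu_{\bC/K}$.

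The hard part will be justifying the cancellation of $\mu_K$ and the tower associativity in the finitely additive, limit-of-compact setting rather than for honest Haar measures: I must know that a factorisation of $\mu_{\bB+\bC}$ as $\nu\otimes\mu_K$ over the fixed atomic $\mu_K$ determines $\nu$ uniquely on $(\bB+\bC)/K$. I would settle this by descending to level $D$, where $\mu_K$ restricts to an atomic Haar measure on the discrete $K(D)$ and $(\bB+\bC)(D)/K(D)$ is locally compact, so uniqueness of the quotient Haar measure applies; compatibility of the normalisations across $D$ then forces the limit measures to satisfy the same relation. A secondary point needing care is that $\bC$, unlike $\bB$, does not split cleanly along $\bA=\bA^{st}\times\bA^{af}$, so in the second identity the archimedean-fibre contribution is routed through $(\bB+\bC)/\bB$ rather than through $\bC$; this is precisely why I prove the second identity via $\bB$ and the third via $\bC$.
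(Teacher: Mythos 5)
Your proof is correct and follows essentially the same route as the paper, whose entire argument is the phrase ``Using 3-1 we deduce'': one regroups tensor factors after substituting the definitions $\mu_{\bA}=\mu_{\bA'}\otimes\mu_{\bA^{af}}$, $\mu_{\bB+\bC}=\mu_{(\bB+\bC)'}\otimes\mu_{\bA^{af}}$, etc., together with Lemma 3-1 and the factorisation $\mu_{\bA^{af}}=\mu_{\bA^{af}/\bB^{af}}\otimes\mu_{\,\bB^{af}}$ from 3-2. Your additional care about cancelling $\mu_K$ and checking identities at the level of a fixed divisor $D$ only makes explicit the uniqueness that the paper's definitions-by-factorisation (e.g.\ of $\mu_{\,\bB/K}$ and $\mu_{(\bB+\bC)/K}$) already presuppose.
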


\medskip

\noindent {\bf 3-4}.  
Similarly to 1-5,  we have the following

\begin{Definition}
Let  $R\subset Q$ be subquotient objects of $\bA$. Suppose that the normalised measure on $R$ is defined. 
For a real valued  function $g$ on $Q$, integrable on $R$, normalised  by the condition $g(0)=1$, 
define the normalised function
$$g_{Q/R}\colon Q/R\To \R, \quad  q+R\mapsto  \biggl(\int_R g (q+r) \,\mu_R(r)\biggr)\,  \biggl(\int_R g(r)\, \mu_R(r)\biggr)^{-1} .$$
Note the abuse of notation: this function may depend on $R$, not just on the quotient $Q/R$. 

 \end{Definition}

 When $R=0$ and $\mu_R$ is an atomic measure,  we will use the notation $g$ for $g_{Q/0}$, the  restriction of $g$ to $Q$. 
 
 \medskip

From now on we will use the notation  $\sint_{T} $ \ for \  $\log\int_T$.

\smallskip

From the definitions and an extension of the Fubini property from integrals over compact and discrete groups to their limits, we deduce

\begin{lem} 
Let 
 $R\subset Q$ be two subquotient objects of $\bA$, on which the normalised measure is defined.
 Let  
 for a real valued  function $g$ on $Q$, normalised  by the condition $g(0)=1$, 
the two integrals on the right are  finite. Then 
$$  \lint_{Q/R} g_{Q/R}=\lint_{ Q} g - \lint_{R} g_{R} .
$$
\end{lem}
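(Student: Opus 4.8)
The plan is to exponentiate the asserted identity, reduce it to a single Fubini-type equality for the product decomposition $\mu_Q=\mu_{Q/R}\otimes\mu_R$, and then justify that Fubini step by descending to the compact and discrete subquotients at which the measures of 3-1 and 3-3 were actually constructed. Since $\lint_T=\log\int_T$, the claim is equivalent, after exponentiating, to
$$\int_{Q/R} g_{Q/R}\,\mu_{Q/R} = \Bigl(\int_Q g\,\mu_Q\Bigr)\Bigl(\int_R g_R\,\mu_R\Bigr)^{-1},$$
where $g_R=g_{R/0}$ is the restriction of $g$ to $R$ (consistent with the convention fixed in the remark after the Definition, using $g(0)=1$), and the denominator is the finite nonzero constant $c:=\int_R g\,\mu_R$, positive because the admissible functions are nonnegative and $g(0)=1$. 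Unwinding the Definition of $g_{Q/R}$ preceding the lemma gives $g_{Q/R}(q+R)=c^{-1}\int_R g(q+r)\,\mu_R(r)$, hence
$$\int_{Q/R} g_{Q/R}\,\mu_{Q/R} = c^{-1}\int_{Q/R}\Bigl(\int_R g(q+r)\,\mu_R(r)\Bigr)\mu_{Q/R}(q+R).$$
Thus the whole content is concentrated in the equality
$$\int_{Q/R}\Bigl(\int_R g(q+r)\,\mu_R(r)\Bigr)\mu_{Q/R}(q+R) = \int_Q g\,\mu_Q.$$

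To establish this last identity I would first invoke that all three measures were defined in 3-1 and 3-3 precisely so that $\mu_Q=\mu_{Q/R}\otimes\mu_R$ holds; for the pairs relevant to the applications this is exactly the content of the Lemmas in 3-1 and 3-3. Next I would use that $g$ lies in the admissible class (finite combinations of characteristic functions of the $\bA(D)$-subquotients and their products with characters), so that on the direct-limit factors the nonconstant region of $g$ and its support lie inside a single compact subquotient $(\bA(D)'+\cdots)/\cdots$, while on the inverse-limit factors the integral is a finite atomic sum over one discrete subquotient $\bB'/\bB(D^\circ)'$-type group. At such a finite level the objects $Q$, $R$, $Q/R$ are honest compact or discrete (hence locally compact) groups carrying genuine Haar measures, the factorisation $\mu_Q=\mu_{Q/R}\otimes\mu_R$ is Weil's quotient-integration formula $\int_G=\int_{G/H}\int_H$, and the desired equality is the classical Fubini theorem. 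Because the normalisations $c_D$ of 3-1 were chosen to respect the direct (resp.\ inverse) limit, the value of each side is independent of which level $D$ containing the support one computes at, and the identity passes to the limit.

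Substituting this equality into the previous display gives $\int_{Q/R} g_{Q/R}\,\mu_{Q/R}=c^{-1}\int_Q g\,\mu_Q$, which is finite by hypothesis; taking logarithms yields $\lint_{Q/R} g_{Q/R}=\lint_Q g-\lint_R g_R$, as claimed.

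The main obstacle is the Fubini step itself. The ambient groups $Q$, $R$, $Q/R$ are in general \emph{not} locally compact (this is the point of 2-11), and the measures are only finitely additive and built as limits of compact/discrete pieces, so the standard product-measure Fubini theorem does not apply directly. The real work is therefore the bookkeeping of the reduction to finite level: verifying that for the admissible functions each of the three integrals stabilises at some divisor $D$, and that the chosen normalisations $c_D$ make the finite-level Weil formula compatible across both the direct limit of compact subquotients and the inverse limit of discrete subquotients, so that the three finite-level identities glue into the one above. No new analytic estimate is needed beyond this compatibility.
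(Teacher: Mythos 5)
Your proposal is correct and follows essentially the same route as the paper: the paper's proof is precisely "from the definitions and an extension of the Fubini property from integrals over compact and discrete groups to their limits," and your argument is a fleshed-out version of exactly that — unwind the definition of $g_{Q/R}$, apply the factorisation $\mu_Q=\mu_{Q/R}\otimes\mu_R$ from the Lemmas of 3-1 and 3-3, and justify the Fubini step by passing to the compact/discrete finite levels where Weil's quotient-integration formula holds and where the chosen normalisations make the identity stable under the limits.
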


\bigskip
\medskip

\noindent {\bf 4. Numbers $h^i(D)$ is dimension two, and the Euler characteristic} 

\bigskip

\noindent {\bf 4-1}.  
We will be working with the following functions on adeles and their quotient subobjects.

\begin{Definition}
For an idele $\alpha=(\alpha_y)\in \bB^\times$  define the function $$f_\alpha(u)=\prod_y f_{\alpha_y}(u_y)\colon \bA\To\R,\qquad u=(u_y), u_y\in \bA_y,$$
 where $f_{\alpha_y}(u_y)=f_{1}(\alpha_y^{-1}u_y)$ and 
 
 $$
 f_{1}(u_y)=
 \begin{cases}
 \text{\rm char}_{\bA_y^0}(u_y) \quad\  \text{\rm \  for non-archimedean and horizontal curves $y$},\\
 \exp\bigl(-e_v\pi\, |\iota_y(u_y)|_v^{2/e_v}\bigr) \quad \qquad \, \text{\rm   for an \lq archimedean fibre\rq\  $y=S_v$  over an  archimedean place $v$,}
\end{cases}
$$
\medskip
$e_v$ is defined in 1-3, $\iota_y$ is defined in 2-7.  
\end{Definition}
   
Compare with 1-3.

The map $\bB^\times\ni\alpha\To\{f_\alpha:\alpha\in \bB^\times\}$  factorises through the well defined map  from divisors/replete divisors $\{D_\alpha:\alpha\in\bB^\times\}$ to the set $\{f_\alpha:\alpha\in \bB^\times\}$.

\smallskip

 The function $f_\alpha$ is normalised: $f_\alpha(0)=1$. 
 
 \smallskip 

 The restriction of $f_\alpha$ on $\bA'$  is $\text{\rm char}_{\alpha\bA(0)'}$. 
 
 Following the definitions, for an \lq archimedean fibre\rq\ $y$ and $\alpha_y\in \bB_y^\times$  we have
 $$
 \lint_{\bA_y} f_{\alpha_y} \, \mu_{\bA_y}=\lint_{\bA_y} f_{1} \, \mu_{\bA_y}+ \log \iota_y(\alpha_y)= \log\iota_y(\alpha_y)=-v_y(\alpha_y).
 $$

 \smallskip
 
 For subobjects $R\subset Q$ of $\bA'$ we get 
 $$f_{\alpha, Q/R}=\text{\rm char}_{(R+Q(D_\alpha))/R}=\text{\rm char}_{(R+\alpha\bA(0)\cap Q)/R}$$ if $\mu_R( R\cap\alpha\bA(0))$ is finite positive.

 \medskip
 
\noindent {\bf 4-2}.  
The definitions in 3-1 and 2-9 imply that the integrals $\sint_{\bA/(\bB+\bC)}\, f_{\alpha,\bA/(\bB+\bC)}\, \mu_{\bA/(\bB+\bC)}$, $\sint_{\bA/\bB}\, f_{\alpha,\bA/\bB}\, \mu_{\bA/\bB}$, 
$\sint_{( \bB+\bC)/\bB}\, f_{\alpha, (\bB+\bC)/\bB}\,\mu_{ (\bB+\bC)/\bB}$, $ \sint_{\bC/K}\, f_{\alpha,\bC/K}\,\mu_{\,\bC/K}$, $\sint_{K} \, f_\alpha \, \mu_K$,  $\sint_{\bC} \, f_\alpha \, \mu_{\bC}$ are finite, while $\sint_{\bA} \, f_\alpha \, \mu_{\bA}$ is not. 

In the geometric case we also have 
$$\sint_{\bC} \, f_\alpha \, \mu_{\bC}=\log \mu_{\bC}(\bC(D_\alpha)), \quad 
\sint_{\bA/\bB}\, f_{\alpha,\bA/\bB}\, \mu_{\bA/\bB}=\log \mu_{\bA/\bB} ( \bA(D_\alpha)/\bB(D_\alpha)).
$$

\smallskip

Keeping in mind the cohomology spaces in 2-10 and extending 1-10, introduce

\begin{Definition}
For $\alpha\in\bB^\times$ define
\smallskip
 $$
 \begin{aligned}
 h^0(D_\alpha)&:=\lint_{K} \, f_\alpha \, \mu_K, \\
 h^2(D_\alpha)&:=-\lint_{\bA/(\bB+\bC)}\, f_{\alpha,\bA/(\bB+\bC)}\, \mu_{\bA/(\bB+\bC)}. 
 \end{aligned}
 $$ 
 This integrals are defined in view of the properties of the adelic objects discussed in sect. 2. 
 \end{Definition}

 \smallskip
Note the minus sign for $h^2$, compare with the minus sign in the definition of  $h^1$ in 1-6, 1-7.

\smallskip

Denote $$c_\star:=\mu_K(\{0\}) >0.$$ 

In the geometric case we obtain, 
$$\begin{aligned}
h^0(D_\alpha)= \lint_K \, f_\alpha\, \mu_K =\log \mu_K(\{0\}) \,+ \, \log\#\, K\cap \alpha\bA(0)&=\log c_\star \,+ \, \log\#\, K\cap \alpha\bA(0)
\\ 
& =\log c_\star \,+ \, \log\#\, H^0(\cA_S(D_\alpha)).
\end{aligned}
$$

\smallskip

In the arithmetic case, $K\cap \alpha \bA(0)$ is a finitely generated $o_k$-module.  Similarly to 1-4, we obtain  
$$
\begin{aligned}
h^0(D_\alpha)=\lint_K \, f_\alpha\, \mu_K &=
\log \int_{ K\cap \alpha \bA(0)} 
 \exp\bigl(  
-\pi  \sum_v e_v |\iota_{S_v}(\alpha_{S_v}^{-1} \,u)|_v^{2/e_v} \bigr) \mu_K(u)\\
&=
\log \,c_\star+\log\, \bigl(\,\sum_{  u\in K\cap \alpha \bA(0)} 
\exp\bigl(  
-\pi  \sum_v e_v |\iota_{S_v}(\alpha_{S_v}^{-1} \,u)|_v^{2/e_v} \bigr)\bigr),
\end{aligned}
$$
  the internal sum is taken over archimedean places $v$.  Compare with the one-dimensional formula in 1-4. 
  This formula for $h^0$ agrees with the proposed formula in \cite{GS}.

\medskip
 
For a divisor $D$ we have 
$
H^1(\cA_S(D))\simeq (\bB+\bC)(D)/(\bB(D)+\bC(D))$.  It is 
  isomorphic to the quotient of  
 $(\bB+\bC)(D)/\bB(D)$ by $(\bB(D)+\bC(D))/\bB(D)$, and the quotient 
 $(\bB(D)+\bC(D))/\bB(D)$ is  isomorphic to $\bC(D)/K(D)$.

\begin{Definition}
For $\alpha\in\bB^\times$ define

\smallskip
 $$
 h^1(D_\alpha):=\lint_{( \bB+\bC)/\bB}\, f_{\alpha, (\bB+\bC)/\bB}\,\mu_{ (\bB+\bC)/\bB} - \lint_{\bC/K}\, f_{\alpha,\bC/K}\,\mu_{\,\bC/K}.
 $$
\end{Definition}

\smallskip

 Even though $( \bB+\bC)/\bB\simeq \bC/K$, the functions $f_{\alpha, (\bB+\bC)/\bB}$ and $f_{\alpha,\bC/K}$ are different, and 
  $ h^1(D_\alpha)$  is in generally non-zero; see about the abuse of notation in 3-4.  
  
  \smallskip
  
  \begin{erthm}
 
  In the geometric case, comparing with 2-10, we immediately deduce that 
  $$h^j(D_\alpha)=\log\#\,  H^j(\cA_S(D_\alpha))+n_j$$ for an appropriate constant $n_j$.
   In particular, $n_0=\log c_\star$.
\end{erthm}
 
  \medskip
 
\noindent {\bf 4-3}.  
 Now, for the Euler characteristic 
 we get
 \smallskip
 $$
 \begin{aligned}
 \chi_{\bA}(D_\alpha)= \chi (D_\alpha)&=h^0(D_\alpha)-h^1(D_\alpha)+h^2(D_\alpha)\\
 &=-\lint_{(\bB+\bC)/\bB}\, f_{\alpha,  (\bB+\bC)/\bB}\,\mu_{( \bB+\bC)/\bB}-\lint_{ \bA/(\bB+\bC)}\, f_{\alpha,\bA/(\bB+\bC)}\,\mu_{\bA/(\bB+\bC)} \\
 &\phantom{=\ \ }+\lint_{K} \, f_\alpha\,\mu_K+ \lint_{\,\bC/K}\, f_{\alpha, \bC/K}\,\mu_{\,\bC/K}. 
  \end{aligned}
 $$
 \smallskip

Using  Lemma 3-3 and  3-4 we obtain

\begin{thm}
 $$
 \chi (D_\alpha)=\lint_{ \bC}\, f_{\alpha}\,\mu_{\,\bC}- \lint_{\bA/\bB}\, f_{\alpha,\bA/\bB}\,\mu_{\,\bA/\bB}.
 $$
 \end{thm}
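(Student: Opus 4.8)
The plan is to begin from the expanded expression for the Euler characteristic displayed immediately before the statement, namely
$$
\chi(D_\alpha)=\lint_{K} f_\alpha\,\mu_K+\lint_{\bC/K} f_{\alpha,\bC/K}\,\mu_{\,\bC/K}-\lint_{(\bB+\bC)/\bB} f_{\alpha,(\bB+\bC)/\bB}\,\mu_{(\bB+\bC)/\bB}-\lint_{\bA/(\bB+\bC)} f_{\alpha,\bA/(\bB+\bC)}\,\mu_{\bA/(\bB+\bC)},
$$
and to collapse these four terms into the two asserted ones by applying the Fubini-type Lemma 3-4 along two towers of subquotients, using the tensor factorisations of Lemma 3-3. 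First I would treat the two $H^0$-type terms: applying Lemma 3-4 with $Q=\bC$, $R=K$, $g=f_\alpha$ (legitimate since $f_\alpha(0)=1$ and the relevant integrals are finite by 4-2), together with the decomposition $\mu_{\bC}=\mu_{\bC/K}\otimes\mu_K$ from the definitions in 3-3, gives
$$
\lint_{\bC/K} f_{\alpha,\bC/K}\,\mu_{\,\bC/K}=\lint_{\bC} f_\alpha\,\mu_{\,\bC}-\lint_{K} f_\alpha\,\mu_K.
$$
Substituting this, the terms $\lint_K f_\alpha\,\mu_K$ cancel and the first two summands combine into $\lint_{\bC} f_\alpha\,\mu_{\,\bC}$, the first term of the claimed formula.

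Next I would treat the two remaining summands, which I expect to combine into $-\lint_{\bA/\bB} f_{\alpha,\bA/\bB}\,\mu_{\,\bA/\bB}$. Here I apply Lemma 3-4 to the tower $\bB\subset\bB+\bC\subset\bA$ realised inside $\bA/\bB$, taking $Q=\bA/\bB$, $R=(\bB+\bC)/\bB$, so that $Q/R\simeq\bA/(\bB+\bC)$, and $g=f_{\alpha,\bA/\bB}$ (which satisfies $g(0)=1$ by construction). This yields
$$
\lint_{\bA/(\bB+\bC)} (f_{\alpha,\bA/\bB})_{\bA/(\bB+\bC)}\,\mu_{\bA/(\bB+\bC)}=\lint_{\bA/\bB} f_{\alpha,\bA/\bB}\,\mu_{\,\bA/\bB}-\lint_{(\bB+\bC)/\bB} (f_{\alpha,\bA/\bB})_{(\bB+\bC)/\bB}\,\mu_{(\bB+\bC)/\bB}.
$$
To match this with what I need, I must identify the restriction $(f_{\alpha,\bA/\bB})_{(\bB+\bC)/\bB}$ with $f_{\alpha,(\bB+\bC)/\bB}$ and the averaged function $(f_{\alpha,\bA/\bB})_{\bA/(\bB+\bC)}$ with $f_{\alpha,\bA/(\bB+\bC)}$. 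The first identification is immediate from the definition in 3-4: on a coset $a+\bB$ with $a\in\bB+\bC$ both functions equal the same ratio of $\bB$-averages of $f_\alpha$, the only difference being the domain on which they are defined. The second is the transitivity of normalised averaging, and this is the point where Lemma 3-3 enters: it amounts to the compatibility $\mu_{\bB+\bC}=\mu_{(\bB+\bC)/\bB}\otimes\mu_{\,\bB}$, obtained by assembling the tensor decompositions recorded in 3-1, 3-2 and 3-3 on the primed and archimedean parts; granting it, iterating the $\bB$-average and then the $(\bB+\bC)/\bB$-average reproduces the single $(\bB+\bC)$-average defining $f_{\alpha,\bA/(\bB+\bC)}$, with the common normalisation cancelling.

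With both identifications the displayed equation rearranges to
$$
\lint_{(\bB+\bC)/\bB} f_{\alpha,(\bB+\bC)/\bB}\,\mu_{(\bB+\bC)/\bB}+\lint_{\bA/(\bB+\bC)} f_{\alpha,\bA/(\bB+\bC)}\,\mu_{\bA/(\bB+\bC)}=\lint_{\bA/\bB} f_{\alpha,\bA/\bB}\,\mu_{\,\bA/\bB},
$$
and feeding this together with the first step back into the expanded formula gives exactly $\chi(D_\alpha)=\lint_{\bC} f_\alpha\,\mu_{\,\bC}-\lint_{\bA/\bB} f_{\alpha,\bA/\bB}\,\mu_{\,\bA/\bB}$. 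I expect the main obstacle to be precisely the transitivity identity $(f_{\alpha,\bA/\bB})_{\bA/(\bB+\bC)}=f_{\alpha,\bA/(\bB+\bC)}$: one must verify that the finitely additive measures, glued from the direct-limit-of-compact and inverse-limit-of-discrete pieces, genuinely obey the factorisation $\mu_{\bB+\bC}=\mu_{(\bB+\bC)/\bB}\otimes\mu_{\,\bB}$ and that the relevant Fubini interchange is valid at the level of these limits, so that all integrals in play are finite; this is where the finiteness statements of 4-2 and the careful compatible normalisations of 3-1 through 3-3 do the real work.
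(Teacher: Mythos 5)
Your proof is correct and takes essentially the same route as the paper: the paper's own proof consists of the four-term expansion of $\chi(D_\alpha)$ in 4-3 followed by the single line ``Using Lemma 3-3 and 3-4 we obtain,'' and your two applications of the Fubini-type Lemma 3-4 (to the tower $K\subset\bC$ and to the tower $(\bB+\bC)/\bB\subset\bA/\bB$), together with the measure factorisations assembled from 3-1 through 3-3 and the identification of restricted and iterated averages, are precisely the details that line leaves implicit.
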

 
 \smallskip
 
 Compare with the formula in 1-7.

 \medskip
 
 \noindent{\bf 4-4}. 
Using the character $\psi$ of $\bA$  and the  pairing $\langle\,\,,\,\,\rangle$ in 2-4,
 we can use the natural extension of  harmonic analysis on abelian locally compact groups, to work with Fourier transforms of functions on direct limits of compact subquotients and dual inverse limits of discrete subquotients.

 Let $\kappa\in\bB^\times$ be such that the divisor $\gK$ defined in 2-8 is equal to $D_\kappa$.
 
Let $E$ be a discrete subquotient of $\bA'$ and let $G$ be a compact subquotient of $\bA$ isomorphic to its character space, compatible with the pairing $\langle\,\,,\,\,\rangle$ of 2-4. Let the normalised measures $\mu_G$ and $\mu_E$ be defined and they are related to each other in the dual way, similarly to 3-1.  Similarly to 1-9, we deduce from the 
 definitions  that $f_{\kappa\alpha^{-1},E}$ on $E$ is equal to the normalisation (i.e. the value at 0 is 1) of  the Fourier transform $\cF(f_{\alpha, G})$  of $f_{\alpha, G}$.
 Hence $f_{\alpha,G}$ is the inverse Fourier transform $\cF'(f_{\kappa\alpha^{-1},E})$ of $f_{\kappa\alpha^{-1},E}$ 
  times  the value $\cF(f_{\alpha,G})(0)$ of the Fourier transform of $f_{\alpha,G}$  at 0.
 So $$1=f_{\alpha,G}(0)=\cF'(f_{\kappa\alpha^{-1},E})(0)\cF(f_{\alpha,G})(0), $$
 with the first factor on the right hand side equal to $\int_E f_{\kappa\alpha^{-1},E}\,\mu_E$ and the second factor equal to 
 $\int_{G} f_{\alpha,G}\,\mu_G$. 
   Thus, $$\int_E f_{\kappa\alpha^{-1},E}\,\mu_E=\biggl(\int_{G}\, f_{\alpha,G}\,\mu_G\biggr)^{-1}.$$
     This formula immediately extends to the direct limits of compact and the inverse limits of discrete subquotients of $\bA'$. 
 It also extends to the archimedean fibres, since the Fourier transform there is defined using the Fourier transform of functions of real numbers and  $\iota_y$ is multiplicative.

 \medskip

 \begin{prp}
$$
h^2(D_\alpha)=h^0(D_{\kappa\alpha^{-1}}), \quad h^1(D_\alpha)=h^{1}(D_{\kappa\alpha^{-1}}).
$$
\end{prp}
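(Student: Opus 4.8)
The plan is to derive both identities from the duality formula of 4-4,
$$\int_E f_{\kappa\alpha^{-1},E}\,\mu_E=\Bigl(\int_{G} f_{\alpha,G}\,\mu_G\Bigr)^{-1}$$
for mutually dual pairs $(E,G)$, supplemented by the Fubini identity of Lemma 3-4. The equality $h^2(D_\alpha)=h^0(D_{\kappa\alpha^{-1}})$ will be a single application, while $h^1(D_\alpha)=h^1(D_{\kappa\alpha^{-1}})$ will need the formula applied to each of the two terms defining $h^1$, followed by a reorganisation with Lemma 3-4.

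First I would prove $h^2(D_\alpha)=h^0(D_{\kappa\alpha^{-1}})$. Take $E=K$, discrete in $\bA'$ by the Theorem in 2-9, and $G=\bA/(\bB+\bC)$, which by 3-3 is compact and, by the Corollary in 2-6, is isomorphic to the character group of $K$ compatibly with the pairing $\langle\,\,,\,\,\rangle$; the measures $\mu_K$ and $\mu_{\bA/(\bB+\bC)}$ were defined in 3-1 to be mutually dual. Since for $R=0$ the symbol $f_{\kappa\alpha^{-1},K}$ denotes the plain restriction of $f_{\kappa\alpha^{-1}}$ to $K$, taking logarithms in the formula of 4-4 and recalling the minus sign in the definition of $h^2$ gives at once
$$h^0(D_{\kappa\alpha^{-1}})=\lint_K f_{\kappa\alpha^{-1}}\,\mu_K=-\lint_{\bA/(\bB+\bC)} f_{\alpha,\bA/(\bB+\bC)}\,\mu_{\bA/(\bB+\bC)}=h^2(D_\alpha).$$

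For the self-duality of $h^1$ I would first record the two dual pairs. Using the identification $X(B/C)\simeq C^\perp/B^\perp$ coming from the self-duality of $\bA$ (Proposition 2-5), together with $\bB^\perp=\bB$, $\bC^\perp=\bC$ and $(\bB+\bC)^\perp=\bB^\perp\cap\bC^\perp=K$, the character group of the compact-type object $(\bB+\bC)/\bB$ is $\bB/K$ and the character group of the compact-type object $\bC/K$ is $(\bB+\bC)/\bC$. Applying the formula of 4-4 to each of these pairs rewrites the two terms defining $h^1(D_\alpha)$ as
$$\lint_{(\bB+\bC)/\bB} f_{\alpha,(\bB+\bC)/\bB}\,\mu_{(\bB+\bC)/\bB}=-\lint_{\bB/K} f_{\kappa\alpha^{-1},\bB/K}\,\mu_{\bB/K},$$
$$\lint_{\bC/K} f_{\alpha,\bC/K}\,\mu_{\bC/K}=-\lint_{(\bB+\bC)/\bC} f_{\kappa\alpha^{-1},(\bB+\bC)/\bC}\,\mu_{(\bB+\bC)/\bC},$$
so that $h^1(D_\alpha)$ is expressed entirely through $f_{\kappa\alpha^{-1}}$ on $\bB/K$ and on $(\bB+\bC)/\bC$. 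Now I would apply Lemma 3-4 in the two towers $K\subset\bB\subset\bB+\bC$ and $K\subset\bC\subset\bB+\bC$, whose required measure decompositions $\mu_{(\bB+\bC)/K}=\mu_{(\bB+\bC)/\bB}\otimes\mu_{\bB/K}=\mu_{(\bB+\bC)/\bC}\otimes\mu_{\bC/K}$ are exactly Lemma 3-3. This gives
$$\lint_{(\bB+\bC)/\bB} f_{\kappa\alpha^{-1},(\bB+\bC)/\bB}+\lint_{\bB/K} f_{\kappa\alpha^{-1},\bB/K}=\lint_{(\bB+\bC)/K} f_{\kappa\alpha^{-1}},$$
$$\lint_{(\bB+\bC)/\bC} f_{\kappa\alpha^{-1},(\bB+\bC)/\bC}+\lint_{\bC/K} f_{\kappa\alpha^{-1},\bC/K}=\lint_{(\bB+\bC)/K} f_{\kappa\alpha^{-1}},$$
and both the rewritten $h^1(D_\alpha)$ and the defining expression for $h^1(D_{\kappa\alpha^{-1}})$ then collapse to the single quantity
$$\lint_{(\bB+\bC)/K} f_{\kappa\alpha^{-1}}-\lint_{\bB/K} f_{\kappa\alpha^{-1},\bB/K}-\lint_{\bC/K} f_{\kappa\alpha^{-1},\bC/K},$$
which proves $h^1(D_\alpha)=h^1(D_{\kappa\alpha^{-1}})$.

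The hard part will not be this bookkeeping but the hypotheses underlying each use of 4-4: one must check that the normalised measures of Section 3, assembled through the chain of tensor-product normalisations with all constants $c_0$ and $m_E$ set to $1$, really are dual with respect to $\psi$ for each specific pair $\bigl((\bB+\bC)/\bB,\bB/K\bigr)$ and $\bigl(\bC/K,(\bB+\bC)/\bC\bigr)$, so that the balancing condition \emph{volume of the compact group times volume of a point of its dual equals} $1$ holds there, and that $f_{\kappa\alpha^{-1},E}$ is indeed the normalised Fourier transform of $f_{\alpha,G}$ for these pairs, as in 1-9. In the arithmetic case there is the additional task of splitting every object into its primed part and its archimedean-fibre part, using the tensor decompositions of 3-3 and the factorisation $f_\alpha=\prod_y f_{\alpha_y}$, and then invoking the primed duality and the archimedean-fibre duality of 4-4 factor by factor; keeping the normalisations consistent across these factors is the delicate point.
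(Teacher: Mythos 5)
Your proposal is correct and follows essentially the same route as the paper: the first identity via the duality formula of 4-4 applied to the pair $E=K$, $G=\bA/(\bB+\bC)$, and the second by dualising the two terms of $h^1(D_\alpha)$ through the pairs $\bigl((\bB+\bC)/\bB,\ \bB/K\bigr)$ and $\bigl(\bC/K,\ (\bB+\bC)/\bC\bigr)$ and then reorganising with Lemma 3-3 and Lemma 3-4 in the two towers $K\subset\bB\subset\bB+\bC$ and $K\subset\bC\subset\bB+\bC$. Your version merely makes explicit what the paper compresses (the identification of the dual pairs via Proposition 2-5 and the collapse of both expressions to a common quantity over $(\bB+\bC)/K$), so no substantive difference remains.
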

\begin{proof}
To deduce the first assertion,  apply the previous formula to $E=K$ and $G=\bA/(\bB+\bC)$. 

The deduce the second assertion, use the preceding displayed formula and 
 Lemma 3-3 to obtain $$
\begin{aligned}
&\lint_{(\bB+\bC)/\bB}\, f_{\alpha,  (\bB+\bC)/\bB}\,\mu_{\, (\bB+\bC)/\bB}- \lint_{\bC/K}\, f_{\alpha, \bC/K}\,\mu_{\,\bC/K}\\&\qquad=
- \lint_{\bB/K}\, f_{\kappa\alpha^{-1}, \bB/K}\,\mu_{\,\bB/K}+\lint_{(\bB+\bC)/\bC}\, f_{\kappa\alpha^{-1},  (\bB+\bC)/\bC}\,\mu_{\, (\bB+\bC)/\bC}\\&\qquad=
- \lint_{\bC/K}\, f_{\kappa\alpha^{-1}, \bC/K}\,\mu_{\,\bC/K}+\lint_{(\bB+\bC)/\bB}\, f_{\kappa\alpha^{-1},  (\bB+\bC)/\bB}\,\mu_{\, (\bB+\bC)/\bB}.
\end{aligned}
$$
\end{proof}

\smallskip

The previous Proposition immediately implies 

\begin{thm}
$$\chi(D_\alpha)=\chi(D_{\kappa\alpha^{-1}}).$$
\end{thm}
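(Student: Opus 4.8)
The plan is to read off the result directly from the preceding Proposition, combined with the definition of the Euler characteristic in 4-3, namely $\chi(D_\alpha)=h^0(D_\alpha)-h^1(D_\alpha)+h^2(D_\alpha)$. Two of the three identities I need are already supplied by the Proposition, in the form $h^2(D_\alpha)=h^0(D_{\kappa\alpha^{-1}})$ and $h^1(D_\alpha)=h^1(D_{\kappa\alpha^{-1}})$. All of the analytic content, namely the Fourier-duality computation relating $f_\alpha$ and $f_{\kappa\alpha^{-1}}$ through the self-duality of $\bA$ and the pairing of 2-4, has therefore already been absorbed into that Proposition and into Lemma 3-3, so what remains is only bookkeeping.

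The one extra ingredient is the analogue of the first identity with the roles of $\alpha$ and $\kappa\alpha^{-1}$ exchanged. I would obtain this by applying the Proposition to the idele $\kappa\alpha^{-1}\in\bB^\times$ in place of $\alpha$. The point to record is that $\alpha\mapsto\kappa\alpha^{-1}$ is an involution: since $\kappa$ is fixed, $\kappa(\kappa\alpha^{-1})^{-1}=\kappa\kappa^{-1}\alpha=\alpha$, which at the level of divisors is exactly the statement that $D\mapsto\gK-D$ is an involution (recall $\gK=D_\kappa$). Hence the Proposition, applied to $\kappa\alpha^{-1}$, yields $h^2(D_{\kappa\alpha^{-1}})=h^0(D_\alpha)$.

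With these three identities in hand I would expand $\chi(D_{\kappa\alpha^{-1}})=h^0(D_{\kappa\alpha^{-1}})-h^1(D_{\kappa\alpha^{-1}})+h^2(D_{\kappa\alpha^{-1}})$ and substitute termwise: the first term becomes $h^2(D_\alpha)$, the middle term becomes $h^1(D_\alpha)$ (here using that the middle identity is itself symmetric, so no separate argument is required), and the last term becomes $h^0(D_\alpha)$. Reassembling gives $h^0(D_\alpha)-h^1(D_\alpha)+h^2(D_\alpha)=\chi(D_\alpha)$, which is the claim.

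There is essentially no obstacle here, and the proof is genuinely immediate; the only care needed is in verifying the involutivity of $\alpha\mapsto\kappa\alpha^{-1}$ so that applying the Proposition a second time recovers $\alpha$ rather than some other idele. This mirrors the one-dimensional situation in 1-9, where $\chi(D_\alpha)=-\chi(D_{\gk\alpha^{-1}})$ was deduced from the duality $h^1(D_\alpha)=h^0(D_{\gk\alpha^{-1}})$ in exactly the same formal manner.
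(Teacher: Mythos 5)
Your proof is correct and matches the paper's own route: the paper derives the theorem as an immediate consequence of the preceding Proposition, and your argument simply makes explicit the bookkeeping the paper leaves implicit, namely applying the Proposition a second time to the idele $\kappa\alpha^{-1}$ and using the involutivity of $\alpha\mapsto\kappa\alpha^{-1}$ to recover $h^0(D_\alpha)=h^2(D_{\kappa\alpha^{-1}})$. Your closing observation about the sign is also consistent: in dimension two the swap $h^0\leftrightarrow h^2$, $h^1\leftrightarrow h^1$ preserves $\chi=h^0-h^1+h^2$, whereas the one-dimensional swap $h^0\leftrightarrow h^1$ produces the minus sign in $\chi(D_\alpha)=-\chi(D_{\gk\alpha^{-1}})$.
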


 \smallskip

\begin{erthm}
In the geometric case, from the normalisation of the measures in 3-1 and 3-3 we get    $h^1(D_1)=h^2(D_1)=0$, hence
$$h^j(D_\alpha)=\log\#\,  H^j(\cA_S(D_\alpha))-\log\#\,  H^j(\cA_S(D_1)), \quad j=1,2.$$
So, using the notation of Remark 4-2,
$$n_0=\log c_\star, \quad n_1=-\log\#\,  H^1(\cA_S(D_1)), \quad n_2=-\log\#\,  H^2(\cA_S(D_1)).$$
Then  the  Proposition implies $h^0(D_\alpha)=\log\#\,  H^0(\cA_S(D_\alpha))-\log\#\,  H^0(\cA_S(\gK))$. 
In particular,
 the constant $c_\star$  satisfies
$c_\star^{-1}=\#\,  H^0(\cA_S(\gK))$. Using Proposition 2.10,  $n_0=n_2$.
So, $\chi(D)$ equals  $\log q$ times  the (dimension over $\Bbb F_q$) Euler characteristic of  the complex $\cA_S(D)$ plus  the constant $n_1+2n_0$. 
 \end{erthm}

  \medskip
  
  \noindent {\bf 4-5}.  We can now define the intersection index using the adelic Euler characteristic of the previous subsection.

\begin{Definition}
  For $\alpha,\beta\in \bB^\times$  define the  index
  $$
[D_\alpha,D_\beta]:=\chi(D_1)-\chi(D_\alpha)-\chi(D_\beta)+\chi(D_{\alpha\beta}).
$$
This also gives an induced pairing for replete divisors. 
\end{Definition}

\smallskip

 It is  a  symmetric form. 

For $\gamma\in K^\times= \bB^\times\cap\bC^\times$ we deduce, using translation invariance of the integration, that  
 $$ 
 \lint_{\bC}\, f_{\alpha\gamma}\,\mu_{\,\bC}=\lint_{\bC}\, f_{\alpha}\,\mu_{\,\bC}, \quad 
  \lint_{\bA/\bB}\, f_{\alpha\gamma,\bA/\bB}\, \,\mu_{\,\bA/\bB}=\lint_{\bA/\bB}\, f_{\alpha,\bA/\bB}\, \,\mu_{\,\bA/\bB},
  $$
  hence $[D_\alpha,D_\beta]=[D_{\alpha\gamma},D_\beta]$, so the pairing for replete divisors is invariant with respect to translation by principal divisors.  
  
  \begin{thm}
  In the geometric case, for a divisor $D$ whose support does not include an irreducible proper curve $y$ we have 
 $$
 \chi_{\bA} (D)-\chi_{\bA} (D-D_y)=\chi_{\Bbb A_{k(y)}}(D\vert_y)-\log m_{D_y},$$
 the numbers $m$ were defined in 3-1. 
 
 Hence 
 $$[D,D_y]=\chi_{\Bbb A_{k(y)}}(D\vert_y)-\chi_{\Bbb A_{k(y)}}(0)=\log q\cdot \deg_{\Bbb A_{k(y)}}(D\vert_y).$$
 
 The index $[\,\,,\,\,]$ is $\log q$ times with the intersection pairing.
  \end{thm}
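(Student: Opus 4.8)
The plan is to deduce both displayed identities from the Euler characteristic formula of Theorem~4-3, reducing everything to a computation of the ``layer at $y$''. In the geometric case the formula of 4-3 together with 4-2 reads $\chi_\bA(D)=\log\mu_{\bC}(\bC(D))-\log\mu_{\bA/\bB}(\bA(D)/\bB(D))$. Subtracting the same expression for $D-D_y$, the first identity becomes
\[
\chi_\bA(D)-\chi_\bA(D-D_y)=\log\frac{\mu_{\bC}(\bC(D))}{\mu_{\bC}(\bC(D-D_y))}-\log\frac{\mu_{\bA/\bB}(\bA(D)/\bB(D))}{\mu_{\bA/\bB}(\bA(D-D_y)/\bB(D-D_y))}.
\]
Because $D$ has coefficient $0$ along $y$ while $D-D_y$ has coefficient $-1$, the subobjects in each ratio differ only in the valuation condition imposed along $y$, so each ratio is the volume of a quotient supported on the single curve $y$.

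Next I would identify these quotients with one-dimensional objects on $y$ using the residue isomorphisms of 2-9. Passing to the $y$-component and reducing modulo $\cM_y$ gives $\bA(D)/\bA(D-D_y)\simeq\bA_y^0/\bA_y^1\simeq\A_{k(y)}$ and $\bB(D)/\bB(D-D_y)\simeq k(y)$ (surjectivity by lifting a single $y$-component, the kernel being exactly $v_y\geq 1$). Hence the second ratio equals the normalised volume of $(\bA(D)+\bB)/(\bA(D-D_y)+\bB)\simeq\A_{k(y)}/k(y)$, which by the construction of $\mu_{\bA/\bB}$ in 3-1 is exactly $m_{D_y}$; this produces the term $-\log m_{D_y}$. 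For the first ratio I would check, point by point over the closed points $x\in y$, that the $y$-residue of $K_x$ is the ring of integers of $k(y)$ at $x$, twisted by the poles that $D$ permits where its other components meet $y$; this yields $\bC(D)/\bC(D-D_y)\simeq\A_{k(y)}(D\vert_y)$, where $D\vert_y$ is a genuine divisor on $y$ (since $y$ is not in the support of $D$) of degree $D\cdot y$.

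The decisive point, and the step I expect to be the main obstacle, is to match the two-dimensional normalisations so that $\log\bigl(\mu_{\bC}(\bC(D))/\mu_{\bC}(\bC(D-D_y))\bigr)$ is exactly $\chi_{\A_{k(y)}}(D\vert_y)$. For this I would use that the measures of 3-1 are self-dual with respect to the character $\psi$ of 2-4, whose restriction to the $y$-layer is precisely the section-1 character on $\A_{k(y)}$; the induced quotient measure on $\A_{k(y)}(D\vert_y)$ is then the one-dimensional self-dual measure, so by 1-7 its logarithmic volume is $\chi_{\A_{k(y)}}(D\vert_y)$. One must verify carefully that no constant beyond $m_{D_y}$ is lost in restricting $\mu_{\bC}$ and $\mu_{\bA/\bB}$ to the layer, and that the $\bC$-residue is all of $\A_{k(y)}(D\vert_y)$ rather than a proper subgroup. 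Granting this, the two ratios combine to give the first identity.

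For the ``Hence'' part I would first record that the same computation applies to an arbitrary divisor $E$: the $\bA/\bB$-ratio is always $m_{D_y}$, while the $\bC$-ratio is $\chi_{\A_{k(y)}}(\lambda_E)$ for a divisor $\lambda_E$ on $y$ with $\deg\lambda_E=E\cdot y$ (when $y$ lies in the support of $E$ the $y$-residue lands in the normal-bundle twist, but only its degree $E\cdot y$ is needed). Writing $[D,D_y]=(\chi(D+D_y)-\chi(D))-(\chi(D_y)-\chi(0))$ and applying this layer formula to $E=D+D_y$ and to $E=D_y$, the two copies of $\log m_{D_y}$ cancel, and by the one-dimensional Riemann--Roch of 1-8 (so that differences of $\chi_{\A_{k(y)}}$ see only degrees) one obtains $[D,D_y]=\log q\,\bigl((D+D_y)\cdot y-D_y\cdot y\bigr)=\log q\,(D\cdot y)=\chi_{\A_{k(y)}}(D\vert_y)-\chi_{\A_{k(y)}}(0)$. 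Finally, since $D\mapsto[D,D_y]=\log q\,\deg(D\vert_y)$ is additive in $D$, and since $[\,\,,\,\,]$ is symmetric and invariant under linear equivalence as already noted in 4-5, the moving lemma shows that $[\,\,,\,\,]$ is bilinear and agrees, on curves meeting properly, with $\log q$ times the geometric intersection number; hence $[\,\,,\,\,]$ is $\log q$ times the intersection pairing.
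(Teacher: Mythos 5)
Your proof of the first identity is essentially the paper's own, step for step: Theorem 4-3 combined with 4-2 reduces $\chi_{\bA}(D)-\chi_{\bA}(D-D_y)$ to the two volume ratios; the $\bA/\bB$-ratio equals $m_{D_y}$ through $(\bA(D)+\bB)/(\bA(D-D_y)+\bB)\simeq\A_{k(y)}/k(y)$; the $\bC$-ratio equals $\exp\bigl(\chi_{\A_{k(y)}}(D\vert_y)\bigr)$ through $\bC(D)/\bC(D-D_y)\simeq\A_{k(y)}(D\vert_y)$ --- which the paper does not reprove pointwise but quotes from the Lemma in sect.~4 of \cite{ARR} --- together with 1-7 and the normalisations of 3-1; and your worry about losing constants is handled in the paper by the remark that both differences are independent of the normalisations of $\mu_{\bC}$ and $\mu_{\bA/\bB}$. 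The genuine difference is in the deduction of the second display. You extend the layer computation to divisors $E$ whose support contains $y$, which needs a twisted isomorphism $\bC(E)/\bC(E-D_y)\simeq\A_{k(y)}(\lambda_E)$ with $\deg\lambda_E=E\cdot y$; this is true, but it is strictly stronger than the Lemma invoked for the first identity, and you leave it unproved, so as written your route carries an unverified ingredient. The paper instead deduces the remaining statements from the properties recorded just before the theorem (symmetry and $[D_{\alpha\gamma},D_\beta]=[D_\alpha,D_\beta]$ for $\gamma\in K^\times$), arguing as in Th.~4 of \cite{ARR}: principal-divisor invariance says that $\chi(E+D_y)-\chi(E)$ is unchanged under $E\mapsto E+D_\gamma$, so choosing $\gamma\in K^\times$ with $v_y(\gamma)=1$ moves both $E=D$ and $E=0$ into the situation where the first identity applies verbatim, and additivity of restriction along $y$ plus the one-dimensional Riemann--Roch of 1-8 then give $[D,D_y]=\log q\cdot\deg_{\A_{k(y)}}(D\vert_y)$ with no new lemma. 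Both arguments finish the last assertion the same way (additivity in $D$, symmetry, invariance under linear equivalence, moving lemma); yours buys a more self-contained description of the adelic layers at the price of the unproved twisted lemma, while the paper's stays strictly within the hypotheses it has already established.
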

  \begin{proof}
  For $D=D_\alpha$, $D_y=D_\beta$ and $f=f_\alpha$, $g=f_{\alpha\beta^{-1}}$, we have
  $$
  \chi_{\bA} (D)-\chi_{\bA} (D-D_y)
  =
  \lint_{\bC}\, f\,\mu_{\,\bC}-  \lint_{\bC}\, g\,\mu_{\,\bC}
  -\lint_{\bA/\bB}\, f_{\bA/\bB}\,\mu_{\,\bA/\bB}+ \lint_{\bA/\bB}\, g_{\bA/\bB}\,\mu_{\,\bA/\bB}.
  $$
  Note that the differences of the first two and of the last two integrals do not depend on the normalisation of $\mu_{\bC}$ and of $\mu_{\,\bA/\bB}$. By 2-9, $\bC(D)$ and $\bA(D)/\bB(D)$ are compact.
  Using 4-2, we obtain 
   $$ \begin{aligned}
   \chi_{\bA} (D)-\chi_{\bA} (D-D_y)
  &=\log \mu_{\bC}(\bC(D))-\log \mu_{\bC}(\bC(D-D_y))\\
  &-(\log\mu_{\,\bA/\bB}(\bA(D)/\bB(D))-\log\mu_{\,\bA/\bB}(\bA(D-D_y)/\bB(D-D_y))).
  \end{aligned}$$ 
Then 
  $$
    \chi_{\bA} (D)-\chi_{\bA} (D-D_y)
  =\log \int_{\Bbb A_{k(y)}} f_{D|_y}-\log \mu(  \Bbb A_{k(y)}/k(y))=
  \chi_{\Bbb A_{k(y)}} (D\vert_y)-
  \log m_{D_y},
  $$
  due to 
 the isomorphism  $\bC(D)/\bC(D-D_y)\simeq \Bbb A_{k(y)} (D\vert_y)$ established in Lemma in sect. 4 of \cite{ARR} and the isomorphism $(\bA(D)+\bB)/ (\bA(D-D_y)+\bB)\simeq \Bbb A_{k(y)}/k(y)$, and using 1-7 and 3-1.

 Using the properties stated before the theorem and arguing similar to the proof of 
 Th. 4 of \cite{ARR} (this argument works fine with $m_{D_y}$ not necessarily equal to 1), one deduces that 
 that the index $[\,\,,\,\,]$ is $\log q$ times  the intersection pairing.  
 
 Note that the normalisation  $m_{D_y}=1$  in 3-1 is not needed for the last two statements in the theorem.
 \end{proof}
  
    \medskip
  
  \noindent{\bf 4-6}. We end with more remarks and open questions.  
  \smallskip

\begin{erthm}
In the arithmetic case, one similarly establishes  
a formula relating $[D,D_y]$ and $\deg_{\Bbb A_{k(y)}}(D\vert_y)$ when $D$ is a  divisor and $y$ is a vertical irreducible curve on $S$.
An open problem is the compute the index  for a replete divisor $D$ and a horizontal irreducible curve $y$, the scheme theoretical closure of  a rational point of  $S_\eta(l)$ where $l$ is a finite extension of the number field $k$, with a morphism $s\colon \text{\rm Spec}(O_l)\To S$  and   a replete divisor $D\vert_y=s_y^*(D)$ of $\text{\rm Spec}(O_l)$. 

 \end{erthm}

\begin{erthm}
In the arithmetic case note the very substantial difference of the role of $\kappa$ and the behaviour of $h^i\mapsto h^{2-i}$   from the role of the canonical sheaf and behaviour of the usual cohomology.     The Arakelov adjunction formula (e.g. as in \cite{Ft} or  3.3 Ch. V of \cite{Lg}) involves the canonical sheaf of $S$ over $B$. However, from the point of view of  the two-dimensional adeles  it is  more natural to involve the divisor $\gK$ defined in 2-8, since unlike the canonical sheaf it takes into account two dimensions: the geometric and arithmetic ones. An open problem is to prove an   adjunction formula involving $\gK$. 
\end{erthm}

\begin{erthm}
The one-dimensional  Riemann--Roch formula is   $\chi_{\Bbb A}(D_\alpha)-\chi_{\Bbb A}(D_1)=\log|\alpha|$ of 1-8. Theorem 4-4 and the definition of index $[\,\,\,,\,\,\,]$  immediately imply  the two-dimensional Riemann--Roch formula $$\chi_{\bA}(D_\alpha)-\chi_{\bA}(D_1)=-2^{-1} [D_\alpha,D_{\kappa\alpha^{-1}}].$$
  In the geometric case this is the Riemann--Roch theorem in sect. 4 of  \cite{ARR}. An open problem in the arithmetic case  is to obtain an adelic two-dimensional Riemann--Roch formula and  
 compare it with the  version in \cite{Ft}, based on the use of  the canonical sheaf.
\end{erthm}

 \bigskip

\bibliographystyle{acm}
\bibliography{../../Bibliography}

\end{document}